\documentclass[11pt, reqno]{amsart}
\usepackage[T1]{fontenc}
\usepackage[utf8]{inputenc}
\usepackage{color}
\usepackage{lmodern}
\usepackage[letterpaper,margin=1in]{geometry}
\usepackage{amsmath,amssymb,amsthm,mathtools}
\usepackage{booktabs,array}
\usepackage{microtype}
\usepackage[hidelinks]{hyperref}
\makeatletter

\def\@tocline#1#2#3#4#5#6#7{%
  \ifnum #1>\c@tocdepth
  \else
    \par\addpenalty\@secpenalty\addvspace{#2}%
    \begingroup
    \hyphenpenalty\@M
    \@ifempty{#4}{%
      \@tempdima\csname r@tocindent\number#1\endcsname\relax
    }{%
      \@tempdima#4\relax
    }%
    \parindent\z@
    \leftskip#3\relax
    \advance\leftskip\@tempdima\relax
    \rightskip\@pnumwidth plus4em
    \parfillskip-\@pnumwidth
    #5\leavevmode\hskip-\@tempdima #6\nobreak
    \ifnum #1>1
      \leaders\hbox{\normalfont\kern.3em.\kern.3em}\hfill
    \else
      \hfill
    \fi
    \hbox to\@pnumwidth{%
      \ifnum #1=1
        \hss\bfseries #7%
      \else
        \@tocpagenum{#7}%
      \fi
    }\par
    \nobreak
    \endgroup
  \fi
}

\def\l@section{\@tocline{1}{7pt}{1pc}{}{\bfseries}}
\def\l@subsection{\@tocline{2}{0pt}{3pc}{}{}}
\def\l@subsubsection{\@tocline{3}{0pt}{5pc}{}{}}

\renewcommand{\tocsection}[3]{%
  \indentlabel{\@ifnotempty{#2}{\ignorespaces#1 #2\quad}}#3}

\makeatother
\hypersetup{pdftitle={Simultaneous Unimodularity of Positive Integer Matrices and Their Entrywise Cubes},pdfsubject={A cofactor-preserving extension in every order at least ten}}
\usepackage{fancyhdr}
\usepackage{listings}
\numberwithin{equation}{section}
\newtheorem{theorem}{Theorem}[section]
\newtheorem{proposition}[theorem]{Proposition}
\newtheorem{lemma}[theorem]{Lemma}
\newtheorem{corollary}[theorem]{Corollary}
\theoremstyle{remark}
\newtheorem{remark}[theorem]{Remark}
\newcommand{\Z}{\mathbb Z}
\newcommand{\R}{\mathbb R}
\newcommand{\Q}{\mathbb Q}
\newcommand{\CC}{\mathbb C}
\newcommand{\bu}{\boldsymbol{u}}
\newcommand{\bv}{\boldsymbol{v}}
\newcommand{\bw}{\boldsymbol{w}}
\newcommand{\by}{\boldsymbol{y}}

\newcommand{\bgamma}{\boldsymbol{\gamma}}

\newcommand{\Fp}{\mathbb F_p}
\newcommand{\T}{\mathbb T}
\newcommand{\bx}{\boldsymbol{x}}
\newcommand{\bc}{\boldsymbol{c}}
\newcommand{\bd}{\boldsymbol{d}}
\newcommand{\br}{\boldsymbol{r}}
\newcommand{\cB}{\mathcal B}
\newcommand{\cM}{\mathfrak M}
\newcommand{\cm}{\mathfrak m}
\newcommand{\fS}{\mathfrak S}
\newcommand{\fJ}{\mathfrak J}
\usepackage{indentfirst}

\DeclareMathOperator{\lcm}{lcm}

\newcommand{\cube}[1]{{#1}^{[3]}}
\allowdisplaybreaks[1]
\title{Simultaneous Unimodularity of Positive Integer Matrices and Their Entrywise Cubes}
\date{September 20, 2026}
\author{Xinyi Li}
\address[Xinyi Li]{Department of Mathematics, University of Illinois at Urbana-Champaign, 1409 West Green 
Street, Urbana, IL 61801, USA}
\email{xinyil25@illinois.edu}

\author{Shuochen Wang}
\address[Shuochen Wang]{Department of Mathematics, University of Illinois at Urbana-Champaign, 1409 West Green 
Street, Urbana, IL 61801, USA}
\email{sw84@illinois.edu}

\author{Zimeng Ye}
\address[Zimeng Ye]{Department of Mathematics, University of Illinois at Urbana-Champaign, 1409 West Green 
Street, Urbana, IL 61801, USA}
\email{zimengy3@illinois.edu}

\author{Alexandru Zaharescu}
\address[Alexandru Zaharescu]{Department of Mathematics, University of Illinois at Urbana-Champaign, 1409 West Green 
Street, Urbana, IL 61801, USA}
\email{zaharesc@illinois.edu}
\begin{document}
\maketitle
\thispagestyle{plain}

\begin{abstract}
We solve an open problem posed by Richard Guy in his book \textit{Unsolved Problems in Number Theory.} We prove that, for every integer $n\geq10$, there exist infinitely many $n\times n$ integer matrices $A$ such that every entry is $\ge2$ and $\det A=\det A^{[3]}=1$, where $A^{[3]}$ denotes the entrywise cube of $A$. The matrices may be chosen compatibly, so they can form the leading principal blocks of a single infinite positive integer array. The order-ten case is reduced to an inhomogeneous system consisting of one linear and one diagonal cubic equation.
The circle method then provides infinitely many positive integral solutions. To pass from order $n$ to order $n+1$, we introduce a bordering construction that preserves the signed cofactor vectors of both the matrix and its entrywise cube. Iterating this construction yields the result in every order $n\geq10$.
\end{abstract}

\noindent\textbf{Keywords.} Unimodular matrices; entrywise powers; cofactor-preserving bordering; \(p\)-adic solubility; Hardy–Littlewood circle method; linear–cubic Diophantine systems.\\
\textbf{Mathematics Subject Classification.} 11D72, 11P55

\tableofcontents

\section{Introduction}\label{sec:intro}

For a matrix $A=(a_{ij})\in M_{n\times n}(\mathbb Z)$, write $\cube{A}=(a_{ij}^3)$.
Here $\cube{A}$ is an entrywise operation of the original matrix $A$, not an ordinary matrix power. We study the existence of such matrix $A$ satisfying
\begin{equation}\label{eq:target}
 \det A=\det\cube{A}=1,\qquad a_{ij}\geq2\quad\text{for all }1\leq i,j\leq n.
\end{equation}
The square-entry problem was discussed in Guy's \textit{Unsolved Problems in Number Theory, F28} \textcolor{red}{\cite{Guy}} and in the work of D\u anescu, V\^aj\^aitu and Zaharescu \textcolor{red}{\cite{DVZ}}. Guy also asked whether the problem extends to cubes. To our knowledge, the only result obtained so far in this direction was established by Choudhry\textcolor{red}{\cite[Theorem 2.2 and p.~157]{Choudhry}}.
Choudhry constructs cubic examples of $3\times3$ matrices with no entries equal to $\pm1$, where one entry in that family is zero, which does not satisfy the restriction in Guy's problem that $a_{ij}\notin\{0,\pm1\}$. 
Our main result gives an affirmative answer for $n \times n$ matrices for all $n\ge10$.

\begin{theorem}\label{thm:main}
For every integer $n\geq10$, there are infinitely many integer matrices of
order $n$ satisfying \eqref{eq:target}. Moreover, there is an infinite array
$(a_{ij})_{i,j\geq1}$ of integers such that
\[
 \det(a_{ij})_{1\leq i,j\leq n}
 =\det(a_{ij}^3)_{1\leq i,j\leq n}=1,
 \qquad a_{ij}\geq 2\qquad(n\geq10).
\]
\end{theorem}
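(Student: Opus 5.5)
The plan follows the structure announced in the abstract: a base case in order ten, and then an inductive bordering step that passes from order $n$ to order $n+1$.

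\textbf{Base case, $n=10$.} Fix all entries of a $10\times10$ matrix $A$ except one row, say the last, with entries $x_1,\dots,x_{10}$. Expanding along that row, $\det A=\sum_j C_j x_j$ and $\det\cube{A}=\sum_j D_j x_j^3$. Here $C_j$ and $D_j$ are the signed cofactors computed from the fixed upper $9\times10$ block $B$ and from its entrywise cube $\cube{B}$. The conditions then read
\[
\sum_{j=1}^{10} C_j x_j=1,\qquad \sum_{j=1}^{10} D_j x_j^3=1,\qquad x_j\ge2 .
\]
This is one inhomogeneous linear equation and one diagonal cubic equation in ten variables.

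We choose $B$ explicitly, with entries $\ge2$, subject to three requirements:
\begin{enumerate}
\item $\gcd(C_1,\dots,C_{10})=1$, so the linear equation is solvable.
\item The $C_j$ and $D_j$ take mixed signs, so that positive real solutions of large size exist.
\item The system is nonsingular at every prime $p$.
\end{enumerate}
Next, parametrize the linear equation as $x=x_0+Mz$ with $z\in\Z^9$ a lattice parameter. Substituting this into the cubic gives a cubic polynomial in nine variables. Equivalently, eliminate one variable and apply the Hardy--Littlewood circle method to the pair consisting of a diagonal cubic and a linear form. With ten variables, the count of solutions in a box of size $P$ restricted to the linear hyperplane should be asymptotic to $\fS\,\fJ\,P^{10-1-3}$, with minor arcs controlled by Hua/Weyl bounds for the nine effective cubic sums. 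The two sides of the analysis are as follows.
\begin{itemize}
\item The singular integral $\fJ>0$ provided the real system has a nonsingular solution in the positive orthant; the mixed signs give this.
\item $\fS>0$ follows from $p$-adic solubility with a nonsingular point, via Hensel's lemma. This has to be checked by hand only for small primes (e.g.\ $p=2,3,7$, together with the primes dividing the $C_j,D_j$).
\end{itemize}
This yields infinitely many solutions with $x_j\ge2$.

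\textbf{Inductive step.} Given an $n\times n$ matrix $A$ with $\det A=\det\cube{A}=1$ and entries $\ge2$, we seek a bordering
\[
A'=\begin{pmatrix}A&u\\ v^{T}&t\end{pmatrix}
\]
with $\det A'=\det\cube{A'}=1$, all new entries $\ge2$, and the same structure preserved so that the step can be iterated. A convenient route uses adjugate identities: $\det A'=t-v^T\operatorname{adj}(A)u$, and similarly for the cube. One idea is to take $u$ equal to a column of $A$ plus the unit vector $e_k$ times a shift, which makes the cofactor vectors controllable. A cleaner idea, suggested by the abstract, is to make the new last row agree with a previous row except in one entry chosen so that both determinants are unaffected. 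For example, duplicate column $n$ of $A$ into the new column, set $v$ equal to the last row of $A$ plus $e_n$-type adjustments, and pick $t$ so that the new last row differs from row $n$ only in position $n+1$. The determinant then becomes $(t-a_{nn})\det A$, and similarly $(t^3-a_{nn}^3)\det\cube{A}$ for the cube. These would both be forced to equal $1$, which is impossible for integers.

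So the construction must instead exploit the freedom in $u$ to keep both cofactor vectors, those of $A$ and of $\cube{A}$, invariant along the last row. Concretely, the goal is for the signed cofactors of the new last row of $A'$ and $\cube{A'}$ to coincide with those of $A$ and $\cube{A}$ (padded appropriately). Then the original solution $(x_j)$ for order ten propagates to every order, and the leading principal blocks are compatible, which produces the infinite array.

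\textbf{Main obstacle.} I expect the hardest part to be the base case. It requires choosing $B$ so that the local conditions hold everywhere, and then verifying that ten variables suffice for the circle method despite the linear constraint (effectively nine cubic variables, which is borderline). If the minor-arc bound falls short, the first thing I would try is choosing $B$ so that the cofactors make the linear equation eliminate a variable cleanly. That leaves a genuinely diagonal cubic in nine variables with a linear substitution, to which nine-variable diagonal cubic results (Baker/Vaughan type) apply.
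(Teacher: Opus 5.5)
\textbf{The inductive step is missing.} You correctly rule out the duplicated-row border (this is Remark~\ref{rem:notduplicate}). After that, however, you only state the goal: the cofactor vectors along the variable row of $A'$ and $\cube{A'}$ should be those of $A$ and $\cube{A}$ padded by a zero. You give no mechanism that achieves this, and it is the heart of the paper.

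For the fixed $(m-1)\times m$ block $E$, the paper proceeds in three steps.
\begin{enumerate}
\item It produces a positive integral row $\br$ with $\br=\bu E$ and $\br^{[3]}=\bv\cube{E}$ \emph{simultaneously} and every $u_iv_i\neq0$. Such rows are integral points on the cone $\sum a_jy_j=\sum b_jy_j^3=0$ cut out by the two cofactor vectors. They are found by a circle-method count on that cone, with absolute irreducibility and a real-density argument keeping all $u_i,v_i$ nonzero (Section~\ref{sec:cone}, Lemma~\ref{lem:sharedrow}).
\item It solves $z-\bu\cdot\bw=1$, $z^3-\bv\cdot\bw^{[3]}=1$ in large positive integers by a further application of Theorem~\ref{thm:analytic} (Lemma~\ref{lem:auxiliary}).
\item Lemma~\ref{lem:borderidentity} then gives $\bgamma(E^+)=(\bgamma(E),0)$, and likewise for cubes.
\end{enumerate}

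Your own Schur-complement formula gives a shorter repair for Theorem~\ref{thm:main} if you simply border the full solved matrix. Pick a row $v$ with entries $\ge2$ such that every coordinate of $\alpha=v^{T}\operatorname{adj}(A)$ and of $\beta=(v^{[3]})^{T}\operatorname{adj}(\cube{A})$ is nonzero. This is a nonzero-polynomial condition, so a generic $v$ works. Then $\det A'=\det\cube{A'}=1$ becomes $t-\sum_i\alpha_iu_i=1$, $t^3-\sum_i\beta_iu_i^3=1$. This is a linear--diagonal-cubic system in $n+1\ge11$ variables, and it satisfies the hypotheses of Theorem~\ref{thm:analytic}:
\begin{itemize}
\item the point $(0,\dots,0,1)$ is a nonsingular $p$-adic solution, with minor $-3\alpha_i$;
\item the points $(u,t)=(A_{\cdot j},v_j)$ are positive real zeros of the homogeneous system;
\item at least one of them is nonsingular, by the rank argument of Lemma~\ref{lem:auxiliary}.
\end{itemize}
Theorem~\ref{thm:analytic} then supplies $(u,t)$, and iterating gives the nested array. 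As written, though, the proposal contains neither construction, so nothing beyond order ten is proved.

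\textbf{The order-ten step also has a gap.} It follows the paper's outline, but the analytic input you name does not reach ten variables. The classical eighth-moment bound $\iint|f|^8\ll P^{5+\varepsilon}$, combined with Weyl's inequality on the two remaining factors, gives at best $P^{13/2+\varepsilon}$ on the minor arcs. That exceeds the main term $P^{6}$. The paper instead relies on Wooley's essentially optimal ninth moment \eqref{eq:momentinput} for $\sum e(\alpha x+\beta x^3)$: one factor is bounded pointwise and the other nine by that moment.

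Your fallback does not work. Solving the linear equation for $x_{10}$ turns $D_{10}x_{10}^3$ into the cube of a linear form in the other nine variables, so the cubic is no longer diagonal. A one-term linear equation would avoid this, but it forces an entry equal to $\pm1$.

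Smaller points:
\begin{itemize}
\item Mixed signs do not by themselves give a positive nonsingular real zero of the homogeneous pair. The paper takes a row of $B$, which is a zero because it creates a repeated row, and checks a Jacobian minor.
\item At $p=3$ no point is nonsingular modulo $3$, because the cubic gradient is divisible by $3$. Ordinary Hensel lifting therefore fails, and a modulus-$27$ certificate with the strong Hensel lemma is needed (Lemma~\ref{lem:three}).
\item The possibly very large primes dividing some $D_j$ cannot be checked by hand. The paper handles them uniformly with the Weil bound together with the pairwise-gcd condition.
\end{itemize}
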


The main difficulty is to pass from a solution of order $n$ to one of order $n+1$ while preserving both determinant conditions and the positivity of all entries. Our construction accomplishes this by preserving the signed cofactor vectors of a positive rectangular lower block when we adjoin one row and one column. Once this lower block is fixed, only the first row of the square matrix varies. This makes the construction stable under iteration and allows the order to be increased one at a time without altering the previously constructed entries.

\section{The order-ten seed and fixed determinant equations}\label{sec:seed}

We use the following explicit $9\times10$ lower block:
\begin{equation}\label{eq:B}
 B=\begin{pmatrix}
 4&3&5&5&4&3&5&4&2&3\\
 4&5&5&2&2&3&4&5&3&5\\
 5&5&3&4&5&4&5&3&5&3\\
 2&3&4&5&2&4&4&3&3&3\\
 4&3&3&2&3&4&2&2&5&2\\
 5&4&5&5&2&5&5&4&3&5\\
 2&4&2&3&5&3&5&4&5&2\\
 4&5&2&4&3&4&2&2&5&4\\
 4&5&2&2&3&4&3&5&4&4
 \end{pmatrix}.
\end{equation}
For a row vector $\bx=(x_1,\ldots,x_{10})$, let
\begin{equation}\label{eq:A}
 A(\bx)=\begin{pmatrix}\bx\\ B\end{pmatrix}.
\end{equation}
Let
\begin{equation}\label{eq:box}
 \br=(4,3,5,5,4,3,5,4,2,3),\qquad
 \cB=\prod_{j=1}^{10}(r_j-1/10,r_j+1/10).
\end{equation}
Note that all coordinates in $\cB$ are positive.

\begin{theorem}\label{thm:seed}
Let $B$, $A(\bx)$ and $\cB$ be given by
\eqref{eq:B}--\eqref{eq:box}. There is a constant $\kappa>0$ such that
\begin{align}\label{eq:maincount}
 &\#\{\bx\in\Z^{10}:\bx/P\in\cB,\
       \det A(\bx)=\det\cube{A(\bx)}=1\}\notag\\
 &\hspace{35mm}=\kappa P^6+O(P^{6-1/100})
 \qquad \text{as }P\longrightarrow\infty.
\end{align}
Consequently there are infinitely many matrices satisfying
\eqref{eq:target} in order ten, with every entry positive. The bottom nine
rows are always the fixed rows of $B$, and every first-row entry tends to
infinity along a suitable sequence of such matrices.
\end{theorem}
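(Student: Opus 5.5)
Expanding both determinants along the first row turns the conditions into one linear and one diagonal cubic equation in $\bx$:
\[
 L(\bx):=\sum_{j=1}^{10} c_j x_j = 1,\qquad C(\bx):=\sum_{j=1}^{10} d_j x_j^3 = 1 .
\]
Here $c_j$ and $d_j$ are the signed $9\times9$ cofactors of $B$ and of $\cube{B}$ along the first row. These are fixed integers that we will compute explicitly. We may then eliminate one variable using $L$. Pick an index $k$ with $c_k\neq0$, ideally $c_k=\pm1$ or, after a unimodular change of variables, a primitive direction. Then parametrize the lattice $\{L=1\}$ as $\bx=\bx_0+M\by$ with $\by\in\Z^9$. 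Substituting gives a single inhomogeneous cubic $F(\by)=1$ in nine variables. Its leading form is $\sum_j d_j(\text{linear form})^3$, not diagonal, so it is better to keep the structure. We will therefore run the circle method directly on the pair, using
\[
 N(P)=\int_0^1\!\!\int_0^1 \sum_{\bx/P\in\cB} e\bigl(\alpha(C(\bx)-1)+\beta(L(\bx)-1)\bigr)\,d\alpha\,d\beta
 =\int\!\!\int \prod_{j=1}^{10} f_j(\alpha,\beta)\,e(-\alpha-\beta)\,d\alpha\,d\beta .
\]
Here $f_j=\sum_{x/P\in(r_j-1/10,r_j+1/10)} e(\alpha d_jx^3+\beta c_jx)$. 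The expected exponent is $10-3-1=6$, matching \eqref{eq:maincount}.

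Next, the major arcs. For $\alpha$ near $a/q$ and $\beta$ near $b/q$, each $f_j$ is approximated by $q^{-1}S_j(q,a,b)\,v_j$, where $S_j$ is the complete exponential sum and $v_j$ the singular integral. The standard bounds $S_j\ll q^{1/2+\varepsilon}$ (Weil/Hua for cubic plus linear phase, when $q\nmid d_j$) give absolute convergence of the singular series $\fS$ with ten variables. The singular integral $\fJ$ is positive because $\br$ lies in the box. One must check that $L(\br)$ and $C(\br)$ have the right signs after scaling, namely that the real manifold $\{L=0,\ C=0\}$ (the leading homogeneous parts) meets $\cB$ nonsingularly. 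We verify this by exhibiting a point of $\cB$ on it where the Jacobian of $(L,C)$ has rank $2$, using a small perturbation of $\br$. Note that $\br$ is the first row of $B$, so $L(\br)=C(\br)=0$ exactly, since the determinant has a repeated row. This is presumably why the box is centred there, and it gives nonsingularity once the vectors $(c_j)$ and $(d_jr_j^2)$ are checked to be independent. Positivity of $\fS$ needs $p$-adic solubility with a nonsingular solution for every $p$. For $p$ not dividing the $c_j$, $d_j$, the constant $6$, or a finite set, this follows from Chevalley--Warning plus Hensel. For the finitely many remaining primes we exhibit explicit solutions mod $p^k$ (computer check), and the local factors are $1+O(p^{-1-\delta})$.

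Finally, the minor arcs, which I expect to be the main obstacle. With only ten variables for a cubic with an attached linear equation, we have effectively nine cubic variables after eliminating $\beta$. That is borderline: Vaughan's eight-cube result works for diagonal forms, but here the linear condition couples the variables. My plan is to use Hua/Vaughan-type mean values. We bound $\int\int|f_1\cdots f_{10}|$ on the minor arcs by applying Hölder to split into $\sup|f_j|$ for two variables, via the Weyl bound $P^{3/4+\varepsilon}$, and an eighth moment $\int\int|f|^8\ll P^{5+\varepsilon}$. The extra $\beta$-integration yields one linear condition, giving roughly $P^{8-3-1+\varepsilon}\cdot$(savings). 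If this falls short, I would instead integrate $\beta$ out first. That reduces the problem to counting solutions of a diagonal cubic in ten variables restricted to a hyperplane, and we can use Vaughan's $\int|f|^8\ll P^5$ for cubes with smooth-number Weyl sums to save the extra power $P^{-1/100}$. Once the asymptotic holds with $\kappa=\fS\fJ>0$, the remaining claims follow. Every solution gives a matrix with positive entries $x_j\approx r_jP\ge 2$ for large $P$, the bottom rows are $B$, and as $P\to\infty$ the first-row entries tend to infinity.
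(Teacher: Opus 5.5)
The reduction to $L(\bx)=C(\bx)=1$, the two-dimensional generating functions, and the use of $\br$ as a real nonsingular point all agree with the paper. The genuine gap is the minor arcs, which you flag but do not close, and neither route you sketch can work.

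The only bound you cite for the two-dimensional eighth moment is $P^{5+\varepsilon}$. Dropping the linear equation only enlarges the count, so this is all Vaughan's cubic result gives. Your H\"older split then yields $(P^{3/4+\varepsilon})^2P^{5+\varepsilon}=P^{13/2+O(\varepsilon)}$, which exceeds the main term $P^6$. No sharper pointwise bound can repair this: you would need $\sup_{\cm}|f_j|\le P^{1/2-\delta}$, which Parseval forbids on a set of measure close to one. Your heuristic $P^{8-3-1}$ presupposes $\int_{\T^2}|f|^8\ll P^{4+\varepsilon}$, the diagonal bound for that system. You do not prove it, and it is stronger than what the paper uses, since multiplying by $\sup|f|\le P$ would already give the ninth moment.

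The fallback fails too. Once $\beta$ is integrated out, the count is no longer generated by a product of one-variable cubic sums, so Vaughan's diagonal eighth moment does not apply. Smooth-number Weyl sums restrict the variables, so they cannot give an asymptotic for all lattice points in $P\cB$.

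The missing input is Wooley's ninth moment $\int_{\T^2}|\sum_{x\le X}e(\alpha x+\beta x^3)|^9\ll X^{5+\varepsilon}$, which via H\"older controls nine factors by $P^{5+\varepsilon}$. The tenth factor then needs a pointwise saving on the two-dimensional minor arcs, which your plan also lacks. Where $\beta$ is near $b/q$ with $q$ small, Weyl's inequality gives nothing. The saving must come from $\alpha$ being far from rationals with denominator $qc_1$: a geometric series in each residue class mod $q$, plus partial summation in the cubic phase (Lemma~\ref{lem:minorpointwise}). With $Q=P^{1/20}$ this gives $P^{6-1/80+O(\varepsilon)}$.

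Your local argument also has a gap. Chevalley--Warning only shows that the number of solutions of $L\equiv1$, $C\equiv1\pmod p$ is divisible by $p$. An inhomogeneous system has no trivial solution, so this is consistent with there being none. It also never produces a nonsingular solution, which Hensel lifting requires. The paper instead uses Weil's bound (Lemma~\ref{lem:finitefield}): $N=p^8+O(p^{11/2})$, while singular solutions number $O(p^2)$.

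Your exceptional set includes every prime factor of the eighteen-digit $d_j$, for which you offer only an unspecified computer check. The paper avoids this with $\gcd(d_i,d_j)\mid 840$. For $p\ge13$ at most one $d_j$ vanishes mod $p$, so the Weil count with nine active coordinates still works, and only $p\le 11$ needs explicit certificates. At $p=3$ every solution is singular mod $3$, so a valuation-sensitive Hensel lemma is needed (Lemmas~\ref{lem:three} and~\ref{lem:densitypositive}).

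Finally, $S_j\ll q^{1/2+\varepsilon}$ is false at prime powers such as $q=p^3$. The correct bound $q^{2/3+\varepsilon}$ still gives $\mathcal A(q)\ll q^{-4/3+\varepsilon}$, so that slip is harmless.
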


The seed theorem is proved in Section~\ref{sec:finish}; its analytic and
local ingredients are given in Sections~\ref{sec:cofactor}--\ref{sec:analytic}.
For a rectangular $(n-1)\times n$ matrix $E$, set
\begin{equation}\label{eq:gammanotation}
 \gamma_j(E)=(-1)^{1+j}\det E_{\widehat j},\qquad
 \bgamma(E)=(\gamma_1(E),\ldots,\gamma_n(E)),
\end{equation}
where $E_{\widehat j}$ means that column $j$ has been deleted from matrix $E$.
Write $\bc=\bgamma(B)$ and $\bd=\bgamma(\cube B)$ for the two seed vectors.
Their entries are displayed in Section~\ref{sec:cofactor}.

\begin{theorem}\label{thm:frames}
For every integer $n\geq 10$, there are positive integer matrices $B_n$ of size $(n-1)\times n$, such that $B_{10}=B$, every entry is at least two,
$B_n$ is the upper-left submatrix of $B_{n+1}$, and
\begin{equation}\label{eq:stablecofactors}
 \bgamma(B_n)=(\bc,\underbrace{0,\ldots,0}_{n-10}),\qquad
 \bgamma(\cube{B_n})=(\bd,\underbrace{0,\ldots,0}_{n-10}).
\end{equation}
Consequently, if $\by=(x_1,\ldots,x_{10},t_{11},\ldots,t_n)$, then
\begin{equation}\label{eq:stableequations}
 \det\begin{pmatrix}\by\\ B_n\end{pmatrix}
       =\sum_{j=1}^{10}c_jx_j,\qquad
 \det\cube{\begin{pmatrix}\by\\ B_n\end{pmatrix}}
       =\sum_{j=1}^{10}d_jx_j^3.
\end{equation}
which means the additional first-row entries $t_{11},\ldots,t_n$ are unrestricted by
both determinant equations.
\end{theorem}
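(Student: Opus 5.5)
The plan is a bordering step: from $B_n$ of size $(n-1)\times n$ satisfying \eqref{eq:stablecofactors} I build
\[
B_{n+1}=\begin{pmatrix}B_n&\bu\\ \bv&s\end{pmatrix},\qquad \bu\in\Z^{n-1},\ \bv\in\Z^{n},\ s\in\Z,
\]
with all entries $\ge2$, and show $\bgamma(B_{n+1})=(\bgamma(B_n),0)$ and $\bgamma(\cube{B_{n+1}})=(\bgamma(\cube{B_n}),0)$. Induction from $B_{10}=B$ then gives \eqref{eq:stablecofactors}. For \eqref{eq:stableequations} I use the identity $\det\begin{pmatrix}\by\\E\end{pmatrix}=\by\cdot\bgamma(E)$, which is Laplace expansion along the first row. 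Since $\cube{\begin{pmatrix}\by\\ B_n\end{pmatrix}}=\begin{pmatrix}\cube{\by}\\ \cube{B_n}\end{pmatrix}$, the zero tail of both cofactor vectors kills every $t_j$.

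\emph{Step 1: the new row.} I choose $\bv\in\Z^n$, all entries $\ge2$, with
\[
\bv\cdot\bgamma(B_n)=0,\qquad \cube{\bv}\cdot\bgamma(\cube{B_n})=0 .
\]
By induction these are $\sum_{j\le10}c_jv_j=0$ and $\sum_{j\le10}d_jv_j^3=0$, with $v_{11},\dots,v_n$ free. So it suffices to find one positive solution $(v_1,\dots,v_{10})$ of the homogeneous linear--cubic system in ten variables, and then take $v_j=2$ for $j>10$. I would take this from the same circle-method and local analysis used for Theorem~\ref{thm:seed}, applied to the homogeneous version. Alternatively I would exhibit an explicit solution, since both vectors have entries of mixed sign (see Section~\ref{sec:cofactor}). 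Because $B_n$ has rank $n-1$ and $\bv\perp\bgamma(B_n)$, we get $\bv=\lambda B_n$ for some $\lambda\in\Q^{n-1}$. Likewise $\cube{\bv}=\mu\cube{B_n}$ for some $\mu\in\Q^{n-1}$, using that $\cube{B_n}$ has full rank because $\bgamma(\cube{B_n})\ne0$.

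\emph{Step 2: the new column.} In $\det\begin{pmatrix}\by&t\\ B_n&\bu\\ \bv&s\end{pmatrix}$, subtract $\lambda$ times the middle block rows from the last row. The last row becomes $(0,\dots,0,s-\lambda\cdot\bu)$, so the determinant equals $(s-\lambda\cdot\bu)\,\by\cdot\bgamma(B_n)$, independent of $t$. The same manipulation in the cubed matrix gives $(s^3-\mu\cdot\cube{\bu})\,\cube{\by}\cdot\bgamma(\cube{B_n})$. It therefore suffices to find $\bu\in\Z^{n-1}$ and $s$, all $\ge2$, with
\[
s=1+\lambda\cdot\bu,\qquad (1+\lambda\cdot\bu)^3=1+\mu\cdot\cube{\bu}.
\]
This is a single inhomogeneous cubic equation in $n-1\ge9$ integer unknowns. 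The congruence condition $\lambda\cdot\bu\in\Z$ is handled by restricting $\bu$ to a suitable lattice coset. A positive real solution in the open cone where the equation is indefinite, together with the circle method for one cubic form in at least nine variables restricted to a box, should give integral solutions with entries as large as desired. Here the form is non-diagonal, so the required non-singularity or rank hypotheses must be checked. If that is awkward, I would first reduce to a diagonal situation by fixing all but a few coordinates.

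\emph{Main obstacle.} I expect Step 2 to be hardest: the cubic $(1+\lambda\cdot\bu)^3-1-\mu\cdot\cube{\bu}$ must have positive integer zeros with $s\ge2$, uniformly in $n$. Step 1 is only a one-time computation, since it depends only on $\bc,\bd$. I would first try to arrange $\lambda$ supported on a single row $k$ of $B_n$, i.e.\ make $\bv$ a positive multiple-like perturbation tied to one row. Then $\lambda\cdot\bu$ involves few coordinates, and the cubic becomes nearly diagonal in the remaining coordinates. That regime is where the standard diagonal-cubic circle-method input (as in Section~\ref{sec:analytic}) applies directly.
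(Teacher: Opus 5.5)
\textbf{There is a genuine gap in Step~2.}

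Your framework matches the paper's. The bordering, the row reduction giving $(s-\lambda\cdot\bu)\,\by\cdot\bgamma(B_n)$ and its cubed analogue, and the reduction to $s=1+\lambda\cdot\bu$, $s^3=1+\mu\cdot\cube{\bu}$ are exactly Lemma~\ref{lem:borderidentity} with $k_1=k_3=1$. Your Laplace-expansion derivation of \eqref{eq:stableequations} is also fine. But Step~2 is the whole difficulty, and your plan for it fails.

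\emph{The proposed simplification is impossible.} If $\lambda$ is supported on row $k$, then $\bv=t(B_n)_k$. Hence $\mu=t^3\boldsymbol e_k$, uniquely, since $\cube{B_n}$ has full row rank. Your equation becomes $(1+tu_k)^3=1+t^3u_k^3$, i.e.\ $3tu_k(1+tu_k)=0$, which forces $s\in\{0,1\}$. This is precisely Remark~\ref{rem:notduplicate}.

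\emph{Without that simplification there is no analytic tool.} You are left with one non-diagonal inhomogeneous cubic in $n-1$ variables (only nine when $n=10$). You have not shown its cubic part $(\lambda\cdot\bu)^3-\mu\cdot\cube{\bu}$ to be nonsingular, and nothing in Section~\ref{sec:analytic} applies to it. Fixing most coordinates only leaves a diagonal cubic in too few variables for any circle-method result.

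\emph{Step~1 is not a one-time computation.} $\lambda$ and $\mu$ are determined by $B_n$, which changes at every step. Whatever Step~2 needs of them must therefore be re-established for each $n$.

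\textbf{The missing idea runs opposite to sparsity.} Keep $s$ as a variable. Then $s-\lambda\cdot\bu=1$, $s^3-\mu\cdot\cube{\bu}=1$ is a linear plus \emph{diagonal} cubic system in $n\ge10$ variables. This is exactly the setting of Theorem~\ref{thm:analytic}, provided every $\lambda_i$ and every $\mu_i$ is a nonzero integer. Integrality is arranged by scaling the new row by a suitable integer $M$, which multiplies $\lambda$ by $M$ and $\mu$ by $M^3$.

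The hypotheses of Theorem~\ref{thm:analytic} are then easy:
\begin{itemize}
\item Local solubility comes from $(\bu,s)=(\boldsymbol 0,1)$, where the Jacobian minor in columns $u_i,s$ is $-3\lambda_i\neq0$.
\item Real solubility comes from the positive column points $(B_n\boldsymbol e_j,v_j)$. These satisfy the homogeneous system because $\bv=\lambda B_n$ and $\cube{\bv}=\mu\cube{B_n}$. One of them is nonsingular, since otherwise every row of $B_n$ would be proportional to $\bv$.
\end{itemize}

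The real work, Lemma~\ref{lem:sharedrow}, is choosing the new row so that $\prod_i\lambda_i\mu_i\neq0$. The paper does this as follows.
\begin{itemize}
\item The cone $X_n$ is absolutely irreducible (Lemmas~\ref{lem:isotropic} and~\ref{lem:irreducible}).
\item Each $\lambda_i,\mu_i$ equals $1$ at the $i$th row of $B_n$. By irreducibility, the product is therefore not identically zero on $X_n$.
\item Some row of $B_n$ is a positive nonsingular real point of $X_n$ (Lemma~\ref{lem:positiverow}).
\item Real Zariski density (Lemma~\ref{lem:realpatch}) then gives a positive box on which the product does not vanish.
\item The circle method on the homogeneous cone (Lemma~\ref{lem:conecount}) supplies integral points in that box.
\end{itemize}
None of this appears in your proposal, so as it stands Step~2 has no valid analytic input.
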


The zeros in \eqref{eq:stablecofactors} are \emph{cofactors}, not matrix
entries. Every entry of $B_n$ remains at least two.

\begin{corollary}\label{cor:allcount}
Fix the lower rectangular block $B_n$ in Theorem~\ref{thm:frames}, and let
$\cB_n=\cB\times(2,3)^{n-10}$, with $\cB$ as in \eqref{eq:box}.
Then, for each fixed $n\geq10$,
\begin{align}\label{eq:allcount}
 &\#\left\{\by\in\Z^n:\by/P\in\cB_n,
  \ \det\begin{pmatrix}\by\\B_n\end{pmatrix}
   =\det\cube{\begin{pmatrix}\by\\B_n\end{pmatrix}}=1\right\}\notag\\
 &\hspace{30mm}=\kappa P^{n-4}+O_n(P^{n-4-1/100}),
\end{align}
where $\kappa>0$ is the same leading constant as in
Theorem~\ref{thm:seed}.
\end{corollary}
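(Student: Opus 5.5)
By Theorem~\ref{thm:frames}, the two determinant conditions for $\begin{pmatrix}\by\\B_n\end{pmatrix}$ depend only on the first ten coordinates of $\by$. Indeed, by \eqref{eq:stableequations}, the conditions read $\sum_{j\le10}c_jx_j=1$ and $\sum_{j\le10}d_jx_j^3=1$. These are exactly the conditions $\det A(\bx)=\det\cube{A(\bx)}=1$ from Theorem~\ref{thm:seed}, because $A(\bx)=\begin{pmatrix}\bx\\B\end{pmatrix}$ has cofactor vectors $\bc=\bgamma(B)$ and $\bd=\bgamma(\cube B)$ along its first row. This is Laplace expansion along the first row, using the sign convention in \eqref{eq:gammanotation}.

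The plan is therefore to split $\by=(\bx,\mathbf t)$ with $\mathbf t=(t_{11},\ldots,t_n)$. Since $\cB_n=\cB\times(2,3)^{n-10}$ is a product, the condition $\by/P\in\cB_n$ separates into two independent conditions: $\bx/P\in\cB$, and $t_k/P\in(2,3)$ for each $k$. The count in \eqref{eq:allcount} then factors as the count $N_{10}(P)$ of Theorem~\ref{thm:seed} times $M(P)^{n-10}$, where
\[
M(P)=\#\{t\in\Z:2P<t<3P\}.
\]

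Next I would estimate the two factors. Clearly $M(P)=P+O(1)$, so
\[
M(P)^{n-10}=P^{n-10}+O_n(P^{n-11})
\]
for $P\ge1$. Theorem~\ref{thm:seed} gives $N_{10}(P)=\kappa P^6+O(P^{6-1/100})$. Multiplying,
\[
\kappa P^{n-4}+O(P^{n-4-1/100})+O_n(P^{n-5}),
\]
and the last term is absorbed into the second, since $n-5<n-4-1/100$. This gives \eqref{eq:allcount} with the same $\kappa$. For $n=10$ the statement is literally Theorem~\ref{thm:seed}, since $B_{10}=B$.

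There is no real obstacle once Theorems~\ref{thm:seed} and~\ref{thm:frames} are granted. The only point needing care is confirming that the cofactor expansion in \eqref{eq:stableequations} matches the determinant of $A(\bx)$ with the same signs, so that the solution set is genuinely the seed set times a free box. It is also worth remarking that every entry of these matrices is at least two: for $\by/P\in\cB_n$ and $P\ge 1$, each first-row entry exceeds $(r_j-1/10)P\ge 1.9$ or $2P$, so as an integer it is at least two, and every entry of $B_n$ is at least two by Theorem~\ref{thm:frames}.
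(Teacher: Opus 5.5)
Your proposal is correct and follows the paper's own proof: by \eqref{eq:stableequations} the two determinant conditions involve only the first ten coordinates. The product box therefore makes the count equal to the order-ten count of Theorem~\ref{thm:seed} times $\bigl(\#\{t\in\Z:2P<t<3P\}\bigr)^{n-10}=P^{n-10}(1+O_n(P^{-1}))$, and the extra $O_n(P^{n-5})$ error is absorbed exactly as you describe.
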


\begin{corollary}\label{cor:alllarge}
For every integer $r\geq1$ and every integer $n\geq10^r$, there is a
matrix of order $n$ with both determinants equal to one and every entry
at least $2^r$. Hence, for any $K\in \mathbb R$, all sufficiently
large orders admit such matrices with every entry greater than $K$.
\end{corollary}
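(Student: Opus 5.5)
We prove Corollary~\ref{cor:alllarge} by scaling the count in Corollary~\ref{cor:allcount} and then pushing large first-row entries down into the lower block, which is where we need the order $n$ to grow. Before committing to this, note what the existing results give. Corollary~\ref{cor:allcount} makes the first-row entries of order $P$, but the lower block $B_n$ is fixed, and its entries are only guaranteed to be at least $2$. So a bare order-$n$ count cannot give entries $\ge 2^r$ for $r\ge2$. The hypothesis $n\ge 10^r$ indicates an induction on $r$ in which the order is multiplied by ten at each step.

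The plan is an induction producing, for each $r$, a positive rectangular frame $E^{(r)}$ of size $(N_r-1)\times N_r$ with $N_r=10^r$. Its entries are all $\ge 2^r$, and its signed cofactor vectors $\bgamma(E^{(r)})$ and $\bgamma(\cube{E^{(r)}})$ have the same shape as $(\bc,0,\ldots,0)$ and $(\bd,0,\ldots,0)$. This is the structure that made the seed analysis work.

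To pass from $r$ to $r+1$, we repeat the argument of Theorem~\ref{thm:seed}, now applied to blocks.
\begin{itemize}
\item Use Corollary~\ref{cor:allcount} to choose many rows $\by$ with entries in $(2P,3P)$ (or near $P\br$) satisfying both determinant equations.
\item With $P\ge 2^{r+1}$, stack such rows, together with existing rows, into a new frame of size $(10N_r-1)\times 10N_r$.
\item Arrange the stacking block-triangularly, so that the cofactor vectors factor through those of the smaller frames.
\end{itemize}
Iterating the bordering of Theorem~\ref{thm:frames} then fills every intermediate order $n$ with $10^r\le n<10^{r+1}$. The bordering preserves both cofactor vectors and only appends entries, which can be taken $\ge 2^r$. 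This is the same mechanism as in Theorem~\ref{thm:frames}, started from $E^{(r)}$ in place of $B$. For each such $n$, the first row is then chosen by the circle-method count, as in Corollary~\ref{cor:allcount}, with $P$ large. Both determinants equal one, and all entries are at least $2^r$. The final sentence of the corollary follows by choosing $r$ with $2^r>K$.

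The main obstacle is the inductive step: producing a lower block whose entries are all large while keeping the two cofactor vectors of prescribed, solvable shape. The cubic cofactors must remain nonzero, and the local solubility behind Theorem~\ref{thm:seed} must persist for the new coefficient vectors. My first attempt would keep the ten coefficient vectors literally equal to $\bc,\bd$ by making the new frame block-triangular with the old frame. Then the linear and cubic equations for the top row are unchanged, and the constant $\kappa$ from Theorem~\ref{thm:seed} applies verbatim. I expect the delicate point to be checking that the rows added to enlarge the entries do not alter these cofactors. This is where the bordering identity from Theorem~\ref{thm:frames} must be used ten times per level. That tenfold use is why I expect the order to have to grow by a factor of ten for each doubling of the entry bound.
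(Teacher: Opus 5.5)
Your inductive step is never actually constructed, and the mechanism you sketch for it cannot work. A block-triangular frame contains a zero block, which contradicts the requirement that every entry be at least $2^{r+1}$. Reusing ``existing rows'' of $E^{(r)}$ only keeps the bound $2^r$, not $2^{r+1}$.

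The rows supplied by Corollary~\ref{cor:allcount} are also the wrong kind of row. They satisfy the inhomogeneous equations $\sum c_jy_j=\sum d_jy_j^3=1$. The bordering identity of Lemma~\ref{lem:borderidentity} instead needs a new lower row $\br$ with the homogeneous simultaneous relations $\br=\bu E$ and $\cube{\br}=\bv\cube{E}$. Stacking rows of the first kind gives no control of the two cofactor vectors.

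Finally, Theorem~\ref{thm:border} only appends entries and never changes old ones. Any frame obtained from $B$ by bordering therefore still contains the entries $2$ of $B$, so that route cannot raise the minimum entry. The difficulty you yourself flag — a frame with all entries large and controlled cofactors — is thus left open. Your explanation of the threshold $10^r$ (``ten uses of bordering per level'') does not correspond to any actual construction.

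The missing idea is the Kronecker power. Take an order-ten matrix $A$ from Theorem~\ref{thm:seed} and set $T=A^{\otimes r}$, which has order $10^r$. The identities $(A\otimes H)^{[3]}=\cube{A}\otimes\cube{H}$ and $\det(A\otimes H)=(\det A)^{\operatorname{ord}H}(\det H)^{\operatorname{ord}A}$ give $\det T=\det\cube{T}=1$ at once. Every entry of $T$ is a product of $r$ entries of $A$, hence at least $2^r$.

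You also need neither cofactor vectors of shape $(\bc,0,\ldots,0)$ nor a rerun of the circle method for a top row. Since $T^{-1}=(A^{-1})^{\otimes r}$, the first-row cofactor vectors of $T$ and $\cube{T}$ are $\bc^{\otimes r}$ and $\bd^{\otimes r}$. These have full common support of size $10^r\ge10$, which is all that Theorem~\ref{thm:border} (via Lemma~\ref{lem:extendmatrix}) requires. That lemma then extends $T$ one order at a time with new entries at least $2^r$, covering every $n\ge10^r$.
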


The analytic part of the argument uses standard circle-method estimates; the main new ingredient is structural. The key step is the cofactor-preserving bordering construction of Theorem~\ref{thm:border}, which allows the order to be increased while retaining the required determinant structure. Its proof depends on two points: the relevant row-relation coefficients must remain nonzero, and the resulting inhomogeneous equations must be locally soluble at every prime. These issues are addressed in Sections ~\ref{sec:cone} and \ref{sec:border}.

\section{Cofactors and exact arithmetic certificates}\label{sec:cofactor}

For the block $B$ in \eqref{eq:B},
\begin{equation}\label{eq:cofdef}
 c_j=(-1)^{1+j}\det B_{\widehat j},\qquad
 d_j=(-1)^{1+j}\det(\cube{B})_{\widehat j}\quad(1\leq j\leq10).
\end{equation}
Laplace expansion along the first row gives
\begin{equation}\label{eq:LC}
 L(\bx):=\det A(\bx)=\sum_{j=1}^{10}c_jx_j,\qquad
 C(\bx):=\det\cube{A(\bx)}=\sum_{j=1}^{10}d_jx_j^3.
\end{equation}
The two cofactor vectors are as follows.

\begin{center}
\renewcommand{\arraystretch}{1.12}
\begin{tabular}{rrr}
\toprule
$j$ & $c_j$ & $d_j$\\
\midrule
1&4725&340739472931722111\\
2&18&28733076063275820\\
3&$-1432$&$-369435421691654920$\\
4&4682&124984744924291244\\
5&$-7509$&$-1235085791262920481$\\
6&$-7230$&$-722001405221789442$\\
7&504&791954706125920512\\
8&4797&183749257820706315\\
9&5531&484371424027128179\\
10&$-5416$&$-300611788966121134$\\
\bottomrule
\end{tabular}
\end{center}

Every $c_j$ and every $d_j$ is nonzero. The vectors are primitive:
\begin{equation}\label{eq:primitive}
 \gcd(c_1,\ldots,c_{10})=\gcd(d_1,\ldots,d_{10})=1.
\end{equation}
For the first identity one may use the particularly short certificate
\begin{equation}\label{eq:bezout}
 -159c_1+41658c_2-c_3=1;
\end{equation}
for the second, $\gcd(d_1,d_3)=1$ already suffices.

A stronger arithmetic property of $\bd$ is useful:
\begin{equation}\label{eq:pairgcd}
 \gcd(d_i,d_j)\mid840\quad(i\ne j),\qquad
 \lcm_{i<j}\gcd(d_i,d_j)=840.
\end{equation}
For completeness, all the pairwise gcds are displayed in the upper
triangular table below. Blank entries lie on or below the diagonal.
\begin{center}
\begin{tabular}{r*{9}{r}}
\toprule
$i\backslash j$&2&3&4&5&6&7&8&9&10\\
\midrule
1&3&1&1&3&3&3&3&1&1\\
2&&20&4&3&6&12&15&1&2\\
3&&&4&1&2&56&5&1&2\\
4&&&&1&2&4&1&1&2\\
5&&&&&3&3&3&1&1\\
6&&&&&&6&3&1&2\\
7&&&&&&&3&1&2\\
8&&&&&&&&1&1\\
9&&&&&&&&&1\\
\bottomrule
\end{tabular}
\end{center}
In particular, for every prime $p\geq11$, at least nine of the ten
coefficients $d_j$ are nonzero modulo $p$.

The accompanying script recomputes all twenty cofactors
by two different exact determinant algorithms, then checks
\eqref{eq:primitive}--\eqref{eq:pairgcd}. A description of the computation
appears in Appendix~\ref{app:verify}.

\subsection{A positive nonsingular real point}\hfill

Because $\br$ is a row of $B$, each of $A(\br)$ and
$\cube{A(\br)}$ has a repeated row. Hence
\begin{equation}\label{eq:realzero}
 L(\br)=C(\br)=0.
\end{equation}
For a common zero \(\bx\) of \(L\) and \(C\), write
\[J(\bx)=
\begin{pmatrix}
c_1 & \cdots & c_{10}\\
3d_1x_1^2 & \cdots & 3d_{10}x_{10}^2
\end{pmatrix}\] for the Jacobian matrix of \((L,C)\). We call \(\bx\) a nonsingular solution if \(\operatorname{rank}J(\bx)=2\), and singular solution otherwise.
For such $\bx$, introduce the reduced Jacobian minors
\begin{equation}\label{eq:minor}
 D_{ij}(\bx)=c_i d_jx_j^2-c_jd_ix_i^2.
\end{equation}
The Jacobian minor in columns $i,j$ is $3D_{ij}(\bx)$ and this $\bx$ is nonsingular if and only if at least one Jacobian minor is nonzero. Direct calculation gives
\begin{equation}\label{eq:realminor}
 D_{12}(\br)=1123741091386468277532\ne0.
\end{equation}
Thus $\br$ is a positive nonsingular real solution of $L=C=0$ lying in the box \eqref{eq:box}.

The homogeneous system appears after scaling. Setting $\bx=P\by$ transforms $L(\bx)=C(\bx)=1$ into $L(\by)=P^{-1}$ and $C(\by)=P^{-3}$. The limiting equations are therefore $L(\by)=C(\by)=0$.

\section{Solubility over every ring of \texorpdfstring{$p$}{p}-adic integers}\label{sec:local}
In the following sections, for a prime number \(p\), we write \(\mathbb F_p=\mathbb Z/p\mathbb Z\) for the finite field with \(p\) elements, \(\mathbb Z_p\) for the ring of \(p\)-adic integers, and \(\mathbb Q_p\) for its field of fractions.

We now verify that
\begin{equation}\label{eq:affine}
 L(\bx)=1,\qquad C(\bx)=1
\end{equation}
has a solution in $\Z_p^{10}$ at which its Jacobian has rank two over
$\Q_p$, for every prime $p$.

\subsection{A uniform finite-field criterion}

\begin{lemma}\label{lem:finitefield}
Let $p\geq13$ be prime. Suppose that $\bc\in\Fp^{10}$ is nonzero and
at least nine entries of $\bd\in\Fp^{10}$ are nonzero. Then
\[
 \sum c_jx_j=1,\qquad \sum d_jx_j^3=1
\]
has a solution in $\Fp^{10}$ at which the two gradients are linearly
independent.
\end{lemma}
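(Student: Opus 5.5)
We work directly over $\Fp$. Since $\bc\ne0$, pick an index $k$ with $c_k\ne0$ and solve the linear equation for $x_k$:
\[
 x_k=c_k^{-1}\Bigl(1-\sum_{j\ne k}c_jx_j\Bigr).
\]
Substituting into the cubic leaves a single cubic equation $F(\bx')=1$ in the nine variables $\bx'=(x_j)_{j\ne k}$, where
\[
 F(\bx')=\sum_{j\ne k}d_jx_j^3+d_k\,c_k^{-3}\Bigl(1-\sum_{j\ne k}c_jx_j\Bigr)^3 .
\]
We also need non-proportional gradients $\bc$ and $(3d_jx_j^2)_j$ at the solution; here $p\ge13$ makes $3$ invertible. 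For points on the linear hyperplane, this is exactly the condition that the restricted polynomial $F$ has nonzero gradient. So the task is to find a nonsingular $\Fp$-point of the affine hypersurface $F=1$ in $\mathbb A^9$.

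The cleanest route avoids general Weil/Lang–Weil bounds by specializing most variables. We have at least nine indices with $d_j\ne0$, while only one linear constraint is used. So choose indices $j_1,j_2,j_3$, distinct from $k$ if possible, with $d_{j_i}\ne0$, and set all other variables except $x_k,x_{j_1},x_{j_2},x_{j_3}$ to $0$. If $c_j=0$ for these three indices, the linear equation fixes $x_k=c_k^{-1}$. We are then left with a diagonal cubic
\[
 d_{j_1}y_1^3+d_{j_2}y_2^3+d_{j_3}y_3^3=1-d_kc_k^{-3}
\]
in three variables. Such an equation always has a solution with some $y_i\ne0$: by the Chevalley–Warning/Weil count for diagonal forms, a ternary diagonal cubic with nonzero coefficients represents every element of $\Fp$, with a point having a nonzero coordinate once $p\ge 13$ (error term $\le 2\sqrt p\,\cdot$const). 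A nonzero $y_i$ gives gradient entry $3d_{j_i}y_i^2\ne0$ in a coordinate where $c_{j_i}=0$ but $c_k\neq0$, so the rank is two. In general the $c_j$ are not zero, and instead of this special case I will handle it uniformly as follows.

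Uniformly, keep two free variables $x_a,x_b$ with $a,b\ne k$, $d_a,d_b\ne0$, and $c_k\ne0$, and set the remaining variables to suitable values. Then $F$ becomes a cubic curve $G(x_a,x_b)=1$ in the plane. Counting points via the Weil bound (or the Hasse–Weil bound for the cubic curve, genus $\le1$) gives at least $p+1-2\sqrt p-O(1)$ points. Singular points of the curve number at most a bounded amount; for a reduced plane cubic there is at most $1$, and for reducible curves at most $3$. So for $p\ge13$ a nonsingular point exists, provided $G-1$ is absolutely irreducible or at least has a reduced component defined over $\Fp$.

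The main obstacle is exactly this degenerate case: ensuring that for some choice of the fixed variables the curve is not a cube of a line or otherwise non-reduced. I would handle it by using the extra variables. With nine nonzero $d_j$ there is enough freedom that $G$ contains the monomials $d_ax_a^3+d_bx_b^3$ plus a term from $x_k^3$. Varying a third fixed coordinate changes the constant term, so we may choose the constant so that the diagonal-type cubic $d_ax^3+d_by^3+(\text{linear form})^3=$const is smooth, avoiding the discriminant locus, which excludes only $O(1)$ values. Then the Hasse–Weil bound $p+1-2\sqrt p-3>0$ for $p\ge13$ finishes the argument.
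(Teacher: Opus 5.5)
\textbf{There is a genuine gap: the smoothness of the specialized plane cubic, which is the step that carries the whole argument, is asserted but not proved, and the mechanism you describe for it is inaccurate.}

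What is already correct: after eliminating $x_k$, the two gradients are independent exactly when $\nabla F\neq0$. Also, once you have a \emph{smooth} projective plane cubic, Hasse--Weil gives at least $p+1-2\sqrt p-3>0$ affine points for $p\ge13$, all of them nonsingular. (A smaller slip: in your special case, Chevalley--Warning does not apply to three variables in degree three. What you need there is Weil's bound $|N-p^2|\le 8p$ when the right-hand side is nonzero, and Hasse--Weil for the smooth diagonal cubic when it is zero.)

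The gap is the step you yourself call the main obstacle. Nothing in the proposal shows that some choice of the fixed coordinates $t_j$ makes
\[
 d_ax^3+d_by^3+d_k(\alpha-\beta x-\gamma y)^3=\delta
\]
smooth. Here $\alpha=c_k^{-1}(1-\sum c_jt_j)$, $\beta=c_a/c_k$, $\gamma=c_b/c_k$ and $\delta=1-\sum d_jt_j^3$, with sums over the fixed coordinates. Your mechanism has two problems:
\begin{itemize}
\item Changing a fixed coordinate $t_j$ with $c_j\neq0$ moves $\alpha$ as well as $\delta$. So you are not varying ``the constant term'' with the linear form held fixed.
\item Varying $\delta$ alone is not automatically enough either. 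If $\alpha=0$, the left side is a binary cubic form $h(x,y)$, and $h=\delta w^3$ is singular at infinity for \emph{every} $\delta$ whenever $h$ has a repeated linear factor.
\end{itemize}
So you must show that the singular locus does not contain your whole parameter family, and then count the bad parameters explicitly. ``$O(1)$ excluded values'' cannot certify the range $p\ge13$, because nothing rules out the excluded set exhausting $\Fp$ for small $p$.

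The step can be completed along your lines. For $d_k\neq0$, put $X_1=x$, $X_2=y$, $X_3=\alpha w-\beta x-\gamma y$, $X_4=w$. The curve is then the section of $d_aX_1^3+d_bX_2^3+d_kX_3^3-\delta X_4^3=0$ by the plane $\beta X_1+\gamma X_2+X_3-\alpha X_4=0$. A tangency computation shows it is singular iff $\delta T^2=\alpha^3$ for one of at most four values $T^2=(\beta X_1+\gamma X_2+X_3)^2$, where $X_1^2=\beta/d_a$, $X_2^2=\gamma/d_b$, $X_3^2=1/d_k$.

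Now let only one further coordinate $t=x_j$ vary, with $j\notin\{a,b,k\}$ and $d_j\neq0$, and set all other fixed coordinates to zero. Each $T^2(1-d_jt^3)-c_k^{-3}(1-c_jt)^3$ is then a nonzero polynomial of degree at most three. Hence at most $12<p$ values of $t$ are bad. If $d_k=0$, you only need $\delta\neq0$.

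The paper avoids this geometry entirely. It counts all solutions of the ten-variable system by orthogonality. Because both equations are diagonal, the sum factors into one-variable sums $\sum_x e_p(ac_jx+bd_jx^3)$, each at most $2\sqrt p$ by Weil, giving $|N-p^8|\le512p^{11/2}$. Singular solutions satisfy $3d_jx_j^2=\lambda c_j$, so there are at most $512p^2$ of them. That argument is uniform and fully explicit; yours, once repaired, replaces it with a discriminant computation plus Hasse--Weil on a single plane cubic.
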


\begin{proof}
Let $s$ be the number of nonzero $d_j$, so that $s\in\{9,10\}$.
Write $e_p(t)=\exp(2\pi it/p)$ and let $N$ count all solutions. 
By character orthogonality,
\[
1_{\{t=0\}}
=
\frac1p\sum_{a\in\mathbb F_p} e_p(at)
\qquad (t\in\mathbb F_p).
\]
Hence
\[
\begin{aligned}
N
&=
\sum_{x\in\mathbb F_p^{10}}
1_{\{L(x)=1\}}
1_{\{C(x)=1\}} =
p^{-2}
\sum_{a,b\in\mathbb F_p}
e_p(-a-b)
\sum_{x\in\mathbb F_p^{10}}
e_p\bigl(aL(x)+bC(x)\bigr).
\end{aligned}
\]
Since \(L\) and \(C\) are diagonal in the coordinates, the inner sum factors as
\[
\sum_{x\in\mathbb F_p^{10}}
e_p\bigl(aL(x)+bC(x)\bigr)
=
\prod_{j=1}^{10}
\sum_{x\in\mathbb F_p}
e_p(ac_jx+bd_jx^3).
\]
Therefore
\[
 N=p^{-2}\sum_{a,b\in\Fp}e_p(-a-b)
       \prod_{j=1}^{10}\sum_{x\in\Fp}e_p(ac_jx+bd_jx^3).
\]
The terms with \(b=0\) contribute exactly \(p^8\). Indeed, if
\(a\neq 0\), choose \(j\) with \(c_j\neq 0\); then
\[
\sum_{x\in\mathbb F_p} e_p(ac_jx)=0,
\]
so the corresponding product vanishes. When \(a=0\), all ten inner
sums equal \(p\), and hence this single term contributes
\(p^{-2}p^{10}=p^8\).

Now let \(b\neq0\). For every active index \(j\), so that \(d_j\neq0\),
the polynomial
\[
ac_jx+bd_jx^3
\]
has degree three with nonzero leading coefficient. The Weil bound \textcolor{red}{\cite{Weil}} gives
\[
\left|
\sum_{x\in\mathbb F_p}
e_p(ac_jx+bd_jx^3)
\right|
\le 2\sqrt p.
\]
For each of the remaining \(10-s\) indices, we use the trivial bound
\(p\). Since there are fewer than \(p^2\) pairs \((a,b)\) with
\(b\neq0\), it follows that
\[
|N-p^8|
\le
p^{-2}\,p^2(2\sqrt p)^s p^{10-s}
=
2^s p^{10-s/2}.
\]
Thus, since \(s\in\{9,10\}\),
\[
|N-p^8|
\le 512p^{11/2}.
\]
For \(s=10\), the last inequality uses \(2/\sqrt p\le1\).

At a singular solution, since $\bc\ne0$, there is a $\lambda\in\Fp$
with
\[
 3d_jx_j^2=\lambda c_j\quad(1\leq j\leq10).
\]
Call an index active if $d_j\ne0$ in $\Fp$, and inactive otherwise. There
are $s$ active indices. For fixed $\lambda$, an active coordinate satisfies
\[
 x_j^2=\frac{\lambda c_j}{3d_j}
\]
and therefore has at most two choices for $x_j$. We bound each inactive coordinate
trivially by $p$ choices. Summing this upper bound over the $p$ possible
values of $\lambda$ shows that the number of singular solutions is at most
\[
 p\cdot2^sp^{10-s}=2^s p^{11-s}\leq512p^2.
\]
The final inequality uses
$2^{10}p\leq 2^9p^2$, which holds for $p\geq13$. The number of nonsingular solutions is therefore at least
\begin{equation}\label{eq:finitepositive}
 p^8-512p^{11/2}-512p^2,
\end{equation}
which is positive for every $p\geq13$.
\end{proof}


\subsection{The exceptional primes}\hfill

For $p=2,3,5,7,11$, the following table supplies explicit certificates.
For each row, both $L(\bx)$ and $C(\bx)$ are congruent to $1$ modulo
$m$. The last column of the table is $D_{ij}(\bx)\bmod p$, not the full Jacobian
minor.

\begin{center}
\small
\renewcommand{\arraystretch}{1.25}
\begin{tabular}{rrlcc}
\toprule
$p$&$m$&$\bx\pmod m$&$(i,j)$&$D_{ij}\bmod p$\\
\midrule
2&2&$(0,0,1,0,1,1,0,1,1,0)$&$(1,5)$&1\\
3&27&$(8,16,18,24,1,6,9,23,19,16)$&$(3,9)$&1\\
5&5&$(3,0,2,2,2,3,3,1,2,2)$&$(1,2)$&3\\
7&7&$(0,4,0,1,2,1,4,2,1,2)$&$(2,3)$&5\\
11&11&$(7,2,10,5,2,7,1,7,4,8)$&$(1,3)$&2\\
\bottomrule
\end{tabular}
\end{center}

For $p=2,5,7,11$, the full minor $3D_{ij}$ is a unit modulo $p$, so ordinary Hensel lifting applies. At $p=3$, every entry of the cubic gradient is divisible by three. We instead start from the solution modulo \(27\) given in the table and apply the following lifting argument.

\begin{lemma}\label{lem:three}
The $p=3$ certificate in the table lifts to a solution of
\eqref{eq:affine} in $\Z_3^{10}$ at which the Jacobian has rank two over
$\Q_3$.
\end{lemma}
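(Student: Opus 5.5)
We would prove this by a Newton–Hensel iteration with a non-unit Jacobian, applied in two variables only. Freeze the eight coordinates $x_j$ with $j\notin\{3,9\}$ at the integer representatives of the $p=3$ certificate in the table. We then regard
\[
F(x_3,x_9)=\bigl(L(\bx)-1,\;C(\bx)-1\bigr)
\]
as a polynomial map $\Z_3^2\to\Z_3^2$. Its Jacobian determinant in the variables $(x_3,x_9)$ is exactly the Jacobian minor of $(L,C)$ in columns $3,9$, namely $3D_{39}(\bx)$. The table gives $D_{39}\equiv1\pmod 3$ at the certificate point $\bx_0$. Hence
\[
v_3\bigl(\det J_F(\bx_0)\bigr)=1,
\]
and $F(\bx_0)\equiv0\pmod{27}$, so $v_3(F(\bx_0))\ge3$.

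The key input is the multivariable Hensel lemma in its quantitative form. Suppose $F\in\Z_p[X_1,\dots,X_k]^k$ and $\bx_0\in\Z_p^k$ satisfy $v_p(F(\bx_0))>2\,v_p(\det J_F(\bx_0))$. Then there is a unique root $\bx^*$ with $v_p(\bx^*-\bx_0)>v_p(\det J_F(\bx_0))$, and moreover $v_p(\det J_F(\bx^*))=v_p(\det J_F(\bx_0))$. Here the inequality reads $3>2\cdot1$, which is precisely why the certificate was taken modulo $27$ rather than modulo $3$.

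If one prefers a self-contained argument, we would run Newton's iteration
\[
\bx_{k+1}=\bx_k-J_F(\bx_k)^{-1}F(\bx_k)
\]
and track valuations. Write $J^{-1}=\operatorname{adj}(J)/\det J$. Then the step has valuation at least $v(F(\bx_k))-1\ge2$. A Taylor expansion shows that the new residual has valuation at least $2\bigl(v(F(\bx_k))-1\bigr)$, which strictly increases starting from $3$ (giving $4,6,10,\dots$). Since every step has valuation at least $2$, the determinant keeps $3$-adic valuation exactly $1$ throughout. The iterates therefore converge in $\Z_3^2$ to a root. We would also note that the Taylor remainder terms have integral coefficients: $L$ is linear and $C$ is a diagonal cubic, so the second- and third-order terms are $3d_jx_j\,h^2$ and $d_jh^3$, and these carry the valuations claimed.

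The limit point $\bx^*$ lies in $\Z_3^{10}$ and satisfies $L=C=1$. Its minor $3D_{39}(\bx^*)$ still has valuation $1$, so it is nonzero and the Jacobian has rank two over $\Q_3$.

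The only genuine obstacle is confirming the numerical inputs that the table asserts: $L(\bx_0)\equiv C(\bx_0)\equiv1\pmod{27}$, and $D_{39}(\bx_0)=c_3d_9x_9^2-c_9d_3x_3^2\equiv1\pmod3$ computed from the cofactor table. These are finite checks. Once they hold, the valuation bookkeeping above is routine.
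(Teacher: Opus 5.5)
Your argument is correct and is essentially the paper's. You freeze the eight coordinates outside $\{3,9\}$, note that the $(3,9)$ Jacobian minor $3D_{39}$ has $3$-adic valuation exactly one, use the mod-$27$ certificate to meet the strong Hensel condition $3>2\cdot 1$, and observe that the minor keeps valuation one at the lifted point. The only cosmetic difference is that the paper first solves the linear equation exactly for $x_3$ (since $c_3=-1432$ is a $3$-adic unit) and applies the one-variable strong Hensel lemma to $F(t)$ with $F'(t)=\tfrac{3}{c_3}D_{39}$, whereas you run the two-variable Newton iteration directly, which is the same mechanism as the paper's later Lemma~\ref{lem:densitypositive}.
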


\begin{proof}
Let $\bx^0$ be the displayed certificate vector in the table. Fix its eight coordinates other than $x_3,x_9$, and put $t=x_9$. Since $c_3=-1432$ is a $3$-adic unit, the linear equation can be solved exactly by setting
\[
 z_3(t)=x_3=\frac{1-\sum_{k\ne3,9}c_kx_k^0-c_9t}{c_3}\in\Z_3[t].
\]
Here $z_3'(t)=-c_9/c_3$. Since the displayed certificate satisfies the
linear congruence modulo $27$, substituting $t_0=x_9^0$ gives
$z_3(t_0)\equiv x_3^0\pmod {27}$.
The cubic equation $C(\bx) = 1$ becomes $F(t)=0$, where
\[
 F(t)=d_3z_3(t)^3+d_9t^3+\sum_{k\ne3,9}d_k(x_k^0)^3-1.
\]
At $t_0=19$, the cubic congruence modulo $27$ implies
\[
 F(t_0)\equiv0\pmod{27}.
\]
Furthermore,
\[
 F'(t)=\frac{3}{c_3}
       \bigl(c_3d_9t^2-c_9d_3z_3(t)^2\bigr).
\]
With \(x_3=z_3(t)\) and \(x_9=t\), the expression in parentheses is \(D_{39}(\bx)\). At \(t=t_0\), the certificate gives \(D_{39}(\bx)\equiv1\pmod3\). Therefore
\[
 v_3(F(t_0))\geq3>2v_3(F'(t_0))=2.
\]
The congruences and valuation assertions used here are verified by the
exact-arithmetic computation described in Appendix~\ref{app:verify}. The strong form of Hensel's lemma gives a root $t\in\Z_3$ with $t\equiv t_0\pmod9$. Set $x_9=t$, $x_3=z_3(t)$, and retain the other eight coordinates of $\bx^0$. The definition of $z_3$ gives the linear equation exactly, while $F(t)=0$ gives the cubic equation. The reduced minor remains a unit modulo $3$ at this point; hence the corresponding full Jacobian minor has valuation exactly one and is nonzero in $\Q_3$.
\end{proof}

\begin{proposition}\label{prop:alllocal}
The system \eqref{eq:affine} has a nonsingular solution in $\Z_p^{10}$ for every prime $p$.
\end{proposition}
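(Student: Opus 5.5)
The plan is to split by the size of $p$ and reduce each case to a result already established above: Lemma~\ref{lem:finitefield} for $p\geq13$, ordinary Hensel lifting from the table for $p\in\{2,5,7,11\}$, and Lemma~\ref{lem:three} for $p=3$. In every case the goal is a point of $\Z_p^{10}$ satisfying \eqref{eq:affine} exactly, at which some $2\times2$ Jacobian minor $3D_{ij}(\bx)$ is nonzero in $\Q_p$.

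For $p\geq13$ we first check the hypotheses of Lemma~\ref{lem:finitefield} after reduction modulo $p$. By \eqref{eq:primitive}, $\bc$ is primitive, so its reduction is nonzero. By \eqref{eq:pairgcd}, any two distinct $d_i,d_j$ have gcd dividing $840=2^3\cdot3\cdot5\cdot7$. Hence no prime $p\geq11$ divides two of the $d_j$, so at least nine $d_j$ are nonzero modulo $p$. The lemma then gives $\bar\bx\in\Fp^{10}$ with $L(\bar\bx)=C(\bar\bx)=1$ and linearly independent gradients. Choose indices $i,j$ with $3D_{ij}(\bar\bx)\not\equiv0\pmod p$. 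Fix the other eight coordinates at arbitrary lifts. The map $(x_i,x_j)\mapsto(L,C)$ then has a Jacobian determinant that is a unit at the reduction, so the multivariate Hensel lemma (Newton iteration in two variables) lifts $\bar\bx$ to an exact solution in $\Z_p^{10}$. The minor $3D_{ij}$ stays a $p$-adic unit there, which gives rank two over $\Q_p$.

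For $p\in\{2,5,7,11\}$ the table supplies $\bx$ modulo $p$ solving both congruences, with $D_{ij}\not\equiv0\pmod p$. Since $p\neq3$, the full minor $3D_{ij}$ is also a unit modulo $p$, and the same two-variable Hensel argument applies. For $p=3$ we simply invoke Lemma~\ref{lem:three}.

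The only mildly delicate step is the multivariate Hensel lifting. To keep it elementary, I would reduce it to the one-variable case exactly as in Lemma~\ref{lem:three}, when $c_i$ or $c_j$ is a unit: solve $L=1$ for $x_i$ linearly and apply Hensel to the resulting cubic in $x_j$. If neither $c_i$ nor $c_j$ is a unit, the cleanest option is simply to cite the standard multivariable Hensel lemma, whose hypothesis is a unit Jacobian determinant. I expect no genuine obstacle; the substance is already in Lemmas~\ref{lem:finitefield} and~\ref{lem:three}.
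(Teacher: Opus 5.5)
Your proposal is correct and follows essentially the same route as the paper: Lemma~\ref{lem:finitefield} plus Hensel lifting for $p\ge13$ (hypotheses checked via \eqref{eq:primitive} and \eqref{eq:pairgcd}), the table plus multivariable Hensel lifting for $p=2,5,7,11$, and Lemma~\ref{lem:three} for $p=3$. Your fallback case where neither $c_i$ nor $c_j$ is a $p$-adic unit cannot actually occur, because $3D_{ij}$ being a unit modulo $p$ forces at least one of them to be a unit, so the one-variable reduction always applies.
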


\begin{proof}
For every prime \(p\ge 13\), the primitivity of \(c\) in \eqref{eq:primitive} gives \(c\not\equiv 0 \pmod p\), while \eqref{eq:pairgcd} implies that at most one of the \(d_j\) vanishes modulo \(p\). Thus the hypotheses of Lemma~\ref{lem:finitefield} are satisfied, so there is a nonsingular solution modulo \(p\). Ordinary Hensel lifting then gives a solution in \(\mathbb Z_p^{10}\) at which the Jacobian has rank two over \(\mathbb Q_p\).

For \(p=2,5,7,11\), the explicit certificates in the table have a Jacobian minor \(3D_{ij}\) that is a unit modulo \(p\), and hence lift by the ordinary multivariable Hensel lemma. The remaining prime \(p=3\) is handled by Lemma~\ref{lem:three}.
\end{proof}

\section{An inhomogeneous cubic--linear counting theorem}\label{sec:analytic}

Now we state the analytic result in a slightly more general form. All its coefficients and boxes are fixed as $P$ tends to infinity.

\begin{theorem}\label{thm:analytic}
Let $s\geq10$, $c_j,d_j\in\Z\setminus\{0\}$ for $1\leq j\leq s$, and $u,v\in\Z$. Let
\[
 L(\bx)=\sum_{j=1}^s c_jx_j,\qquad C(\bx)=\sum_{j=1}^s d_jx_j^3.
\]
Let $\cB=\prod_{j=1}^s I_j$, where $I_j=(\ell_j,h_j)$ and $0<\ell_j<h_j<\infty$. Then we define
\begin{equation}\label{eq:generalthm}
 R(P):=\#\{\bx\in\Z^s:\bx/P\in\cB,\ L(\bx)=u,\ C(\bx)=v\}
 =\fS(u,v)\fJ(\cB)P^{s-4}+O(P^{s-4-1/100}).
\end{equation}
The singular series $\fS(u,v)$ and singular integral $\fJ(\cB)$ defined in \eqref{eq:singseries} and \eqref{eq:singint} are absolutely convergent. Their product is positive if the following two conditions hold:
\begin{enumerate}
\item $L(\bx)=u$, $C(\bx)=v$ has a solution in $\Z_p^s$ with Jacobian rank two over $\Q_p$, for every prime $p$;
\item $L(\boldsymbol{y})=C(\boldsymbol{y})=0$ has a real solution in $\cB$ with Jacobian rank two.
\end{enumerate}
The implicit constant may depend on $s$, the coefficients, $u,v$ and the box $\cB$. 
\end{theorem}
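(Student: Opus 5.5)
We will prove Theorem~\ref{thm:analytic} by the two-dimensional Hardy--Littlewood circle method. Put
\[
 f_j(\alpha,\beta)=\sum_{x/P\in I_j} e(\alpha c_jx+\beta d_jx^3),
\]
so that
\[
 R(P)=\int_0^1\!\int_0^1 \prod_{j=1}^s f_j(\alpha,\beta)\,e(-\alpha u-\beta v)\,d\alpha\,d\beta .
\]
The linear variable $\alpha$ can be handled almost exactly. Write $\alpha=a/q_1+\theta$. Then $f_j$ is a sum of a cubic phase twisted by a linear phase, and the major arcs for $\alpha$ only need $|\theta|\le P^{-1+\delta}$. We define major arcs $\cM$ as the set of $(\alpha,\beta)$ with
\[
 \alpha=a/q+\theta,\qquad \beta=b/q+\phi,\qquad q\le P^{\delta},\qquad |\theta|\le P^{\delta-1},\qquad |\phi|\le P^{\delta-3},
\]
for a small $\delta$ (say $\delta=1/50$). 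The minor arcs $\cm$ are the complement.

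\textbf{Minor arcs.} If $\beta$ lies on a cubic minor arc, Weyl's inequality gives
\[
 \sup|f_j|\ll P^{1-\sigma+\epsilon}
\]
uniformly in $\alpha$, since the linear term is absorbed by Weyl differencing, with $\sigma$ roughly $\delta/4$. For the mean value we use Hua's lemma in the form
\[
 \int_0^1 \sup_\alpha |f_j|^8\,d\beta\ll P^{5+\epsilon}.
\]
This is available because integrating over $\alpha$ as well gives
\[
 \iint |f_j|^8 \ll P^{4+\epsilon},
\]
namely $P^{-1}$ times the $8$-th moment weighted by the linear equation. More cleanly, we apply Hölder with four factors in an $8$-th moment in $(\alpha,\beta)$ jointly, using the bound of $P^{8-4+\epsilon}$ for the number of solutions of the joint linear--cubic system in $4+4$ variables, which follows from Vinogradov's mean value theorem (degree $3$, $s=4$ is below the critical $6$, giving $P^{4+\epsilon}$). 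Then $s-8\ge2$ factors are estimated by the sup bound. The remaining case is $\beta$ on a major arc while $\alpha$ is not close to $a/q$ with a matching denominator. Here we use partial summation against the linear phase: $f_j$ is then small via the bound $\|qc_j\alpha\|^{-1}$ for a complete sum of length $q$. This gives a saving of $P^{\delta}$ per factor, which suffices. Altogether we get
\[
 \iint_{\cm}\ll P^{s-4-1/100}.
\]
\textbf{This is the main obstacle:} $s=10$ is tight, and we must check that the exponents close with a power saving of $1/100$. If the naive Vinogradov count is too weak, we fall back on the known bound of $P^{10-4+\epsilon}$ for $10$-variable diagonal cubic plus linear systems (a Wooley/Brüdern type result) and apply it directly with $\sup$ pruning.

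\textbf{Major arcs.} On $\cM$ we approximate
\[
 f_j(\alpha,\beta)= q^{-1}S_j(q,a,b)\,v_j(\theta,\phi)+O(q P^{2\delta}),
\]
where
\[
 S_j(q,a,b)=\sum_{x\bmod q} e_q(ac_jx+bd_jx^3),\qquad v_j(\theta,\phi)=\int_{P I_j} e(\theta c_j t+\phi d_j t^3)\,dt .
\]
Multiplying, integrating, and extending the ranges gives
\[
 \fS(u,v)\,\fJ(\cB)\,P^{s-4}
\]
plus tails. The tails are controlled by the bound
\[
 |S_j|\ll q^{1/2+\epsilon}(q,b)^{1/2}
\]
for fixed $q$ (a Hua-type bound), by the vanishing of complete linear sums when $q\nmid$ the gcd with $b$, and by
\[
 |v_j|\ll P\,\bigl(1+P|\theta|+P^3|\phi|\bigr)^{-1/3}.
\]
With $s=10$, these give absolute convergence, since the $q$-sum has exponent $2-s/3<-1$.

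\textbf{Positivity.} We write $\fS$ as an Euler product of local densities $\lim_k p^{-k(s-2)}N(p^k)$. Hypothesis (1) together with Hensel's lemma gives each factor $>0$. Factors for large $p$ are $1+O(p^{-1-\eta})$ by the bound on $S_j$, so the product is positive. For $\fJ$, hypothesis (2) and the implicit function theorem give a real manifold of dimension $s-2$ through the nonsingular point, which yields a positive Fourier-integral density. Here we also note that the inhomogeneous terms $u/P$ and $v/P^3$ disappear in the limit.
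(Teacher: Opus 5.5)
There is a genuine gap on the minor arcs, at exactly the step you flag as the main obstacle. Your primary route assumes
\[
\int_0^1\!\int_0^1\Bigl|\sum_{x\le P}e(\alpha x+\beta x^3)\Bigr|^8\,d\alpha\,d\beta\ll P^{4+\varepsilon},
\]
and claims it follows from Vinogradov's mean value theorem. It does not. The cubic case of that theorem counts solutions of the full system $\sum x_i^k=\sum y_i^k$ for $k=1,2,3$. Your eighth moment counts solutions with $k=1,3$ only. Summing over the $O(P^2)$ values of the missing quadratic difference, the theorem gives only $O(P^2J_{4,3}(P))=O(P^{6+\varepsilon})$, which is worse than Hua's lemma. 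Moreover, for the degree pair $\{1,3\}$ the critical number of variables is $1+3=4$, not $6$. So $P^{4+\varepsilon}$ with $4+4$ variables is the main conjecture for this system at its critical point, not a subcritical consequence of Vinogradov's theorem. Your derived bound $\int_0^1\sup_\alpha|f_j|^8\,d\beta\ll P^{5+\varepsilon}$ therefore has no support either.

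What one-dimensional tools actually give is Hua's lemma, uniformly in $\alpha$: $\iint|f_j|^8\ll P^{5+\varepsilon}$. Combined with two pointwise factors this leaves $P^{7-2\sigma+\varepsilon}$, against a main term of size $P^6$. Your fallback does not repair this. A tenth-moment bound $\iint|f|^{10}\ll P^{6+\varepsilon}$ uses up all ten factors, so no factor remains on which to spend the minor-arc saving. Pruning with a sup bound needs an essentially optimal moment with fewer than ten factors.

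The missing ingredient is Wooley's ninth-moment estimate $\iint|\sum_{x\le X}e(\alpha x+\beta x^3)|^9\,d\alpha\,d\beta\ll X^{5+\varepsilon}$, which is what the paper uses. Hölder's inequality over $f_2,\dots,f_{10}$ gives $\iint\prod_{j=2}^{10}|f_j|\ll P^{5+\varepsilon}$. The pointwise bound $|f_1|\ll P^{1+\varepsilon}Q^{-1/4}$ on $\cm$ then supplies the power saving. That bound comes from Weyl's inequality when $\beta$ is minor, and from the geometric-series bound in $\|qc_1\alpha\|$ when $\beta$ is major but $\alpha$ is not. This is also why $s\ge10$ is the threshold.

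Your major-arc and positivity arguments are standard and match the paper, apart from one slip. The bound $|S_j|\ll q^{1/2+\varepsilon}(q,b)^{1/2}$ is false for prime powers: for $p\ne3$, $\sum_{r\bmod p^3}e(r^3/p^3)=p^2=q^{2/3}$. The correct uniform bound is $q^{2/3+\varepsilon}$, and that is what your exponent $2-s/3$ actually uses.
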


We use the following three standard exponential-sum estimates. With $e(t)=\exp(2\pi it)$, Wooley's theorem \textcolor{red}{\cite[Theorem 1.1]{Wooley}} states that, for every $\varepsilon>0$,
\begin{equation}\label{eq:momentinput}
 \int_0^1\int_0^1
 \left|\sum_{1\leq x\leq X}e(\alpha x+\beta x^3)\right|^9
 \,d\alpha\,d\beta\ll_\varepsilon X^{5+\varepsilon}.
\end{equation}
We also use the classical cubic Weyl inequality \textcolor{red}{\cite[Lemma 2.4]{Vaughan}}: when $(a,q)=1$ and $|\theta-a/q|\leq \dfrac{1}{q^2}$,
\begin{equation}\label{eq:weylinput}
 \sum_{x\in PI\cap\Z}e(\theta x^3+\lambda x)
 \ll_{I,\varepsilon}P^{1+\varepsilon}
       (\dfrac{1}{q}+\dfrac{1}{P}+\dfrac{q}{P^3})^{1/4},
\end{equation}
uniformly in the lower-degree coefficients, and hence in the linear coefficient $\lambda$. Finally, the complete-polynomial-sum estimate \textcolor{red}{\cite[Theorem 7.1]{Vaughan}}, specialized to degree three, gives
\begin{equation}\label{eq:completeinput}
 \sum_{r\bmod q}e\bigl((ar+br^3)/q\bigr)
 \ll_\varepsilon q^{2/3+\varepsilon}
 \quad\text{if }\gcd(a,b,q)=1.
\end{equation}

\subsection{Generating functions and the ninth moment}\hfill

Write $\T=\R/\Z$ and set
\begin{equation}\label{eq:fj}
 f_j(\alpha,\beta)=\sum_{x\in PI_j\cap\Z}
                      e(\alpha c_jx+\beta d_jx^3).
\end{equation}

Using the orthogonality relation
\[
\int_{\mathbb T} e(\alpha n)\,d\alpha
=
\mathbf 1_{\{n=0\}}
\qquad (n\in\mathbb Z),
\]
for the two equations \(L(x)=u\) and \(C(x)=v\), we obtain

\begin{equation}\label{eq:orthog}
R(P)=\sum_{\substack{x_j\in PI_j\cap\mathbb Z\\1\le j\le s}}
\int_{\mathbb T^2}
e(\alpha(L(x)-u)+\beta(C(x)-v)\bigr)
\,d\alpha\,d\beta =\int_{\mathbb T^2}e(-u\alpha-v\beta)\prod_{j=1}^s f_j(\alpha,\beta)\, d\alpha\,d\beta.
\end{equation}

Because $c_jd_j\ne0$, the map
$(\alpha,\beta)\mapsto(c_j\alpha,d_j\beta)$ preserves normalized Haar
measure on $\T^2$, which satisfies $\operatorname{meas}(\T^2)=1$. An interval sum in \eqref{eq:fj} is the difference of
two initial sums of lengths $O(P)$. Minkowski's inequality and
\eqref{eq:momentinput} give
\begin{equation}\label{eq:fjmoment}
 \int_{\T^2}|f_j(\alpha,\beta)|^9\,d\alpha\,d\beta
 \ll P^{5+\varepsilon}.
\end{equation}
In particular, H\"older's inequality yields
\begin{equation}\label{eq:productnine}
 \int_{\T^2}\prod_{j=2}^{10}|f_j(\alpha,\beta)|\,d\alpha\,d\beta
 \ll P^{5+\varepsilon}.
\end{equation}
For $s>10$, the additional $s-10$ factors may be trivially bounded by
$O(P)$.

\subsection{A two-dimensional major-arc dissection}\hfill

Set
\begin{equation}\label{eq:QH}
 Q=P^{1/20},\qquad D=|d_1|,\qquad H=|c_1d_1|.
\end{equation}
All distances in this subsection are distances modulo one. For integers
$q,a,b$ with $1\leq q\leq HQ$, $0\leq a,b<q$ and $gcd(a,b,q)=1$, let
\begin{equation}\label{eq:majordef}
 \cM(q,a,b)=\{(\alpha,\beta)\in\T^2:
  |\alpha-\dfrac{a}{q}|\leq \frac{Q^2}{P},\quad
  |\beta-\dfrac{b}{q}|\leq \frac{Q}{P^3}\}.
\end{equation}
Let $\mathfrak M$ denote the union of these rectangles, called the \emph{major arcs}, and let $\mathfrak m=\mathbb T^2\setminus\mathfrak M$ denote their complement, called the \emph{minor arcs}.
For sufficiently large $P$, these rectangles are pairwise
disjoint. Indeed, distinct reduced rational pairs with
denominators $q,q'\le HQ$ differ in at least one coordinate,
modulo one, by at least
\[
\frac{1}{qq'}\ge \frac{1}{(HQ)^2}.
\]
Since $Q=P^{1/20}$ and $H$ is fixed, both half-widths
$Q^2/P$ and $Q/P^3$ are $o(Q^{-2})$, so the rectangles
are disjoint for sufficiently large $P$.

Moreover, the number of rectangles is at most
\[
\sum_{q\le HQ}q^2\ll Q^3.
\]
Each rectangle has measure at most $4Q^3/P^4$.
Consequently,
\begin{equation}\label{eq:majorvolume}
\operatorname{meas}(\mathfrak M)
\ll Q^3\cdot\frac{Q^3}{P^4}
=\frac{Q^6}{P^4}.
\end{equation}

\begin{lemma}\label{lem:minorpointwise}
Uniformly for $(\alpha,\beta)\in\cm$,
\begin{equation}\label{eq:minorpointwise}
 |f_1(\alpha,\beta)|\ll P^{1+\varepsilon}Q^{-1/4}.
\end{equation}
\end{lemma}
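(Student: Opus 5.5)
We prove the contrapositive form via Dirichlet approximation in the cubic variable, followed by the Weyl inequality \eqref{eq:weylinput}. Fix $(\alpha,\beta)\in\cm$ and write $\theta=d_1\beta$ and $\lambda=c_1\alpha$, so that $f_1(\alpha,\beta)=\sum_{x\in PI_1\cap\Z}e(\theta x^3+\lambda x)$. By Dirichlet's theorem choose $1\le q\le P^3/Q$ and $(a',q)=1$ with $|\theta-a'/q|\le Q/(qP^3)\le 1/q^2$. If $q>Q$, then \eqref{eq:weylinput} gives
\[
f_1\ll P^{1+\varepsilon}\Bigl(\frac1Q+\frac1P+\frac{P^3/Q}{P^3}\Bigr)^{1/4}\ll P^{1+\varepsilon}Q^{-1/4},
\]
and we are done. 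The substance is therefore to show that $q\le Q$ forces $(\alpha,\beta)$ into a major arc, or else to bound $f_1$ by other means.

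Suppose $q\le Q$. Pass from $\theta=d_1\beta$ back to $\beta$: since $|d_1\beta-a'/q|\le Q/(qP^3)$, there is an integer $k$ with $|\beta-(a'+kq)/(qD)|\le Q/(qDP^3)\le Q/P^3$, so $\beta$ is close to $b/q_1$ with $q_1\mid qD$, in particular $q_1\le DQ$, and with a correspondingly small error. For $\alpha$ we cannot use the Weyl inequality directly, so we split $f_1$ into residue classes modulo $q$, writing $x=qy+r$, and use partial summation to remove the small phase $\beta-a'/q$. This gives
\[
f_1=\sum_{r\bmod q}e\bigl(a' r^3/q\bigr)\sum_{y}e\bigl(\lambda(qy+r)+\eta(qy+r)^3\bigr),\qquad \eta=\theta-a'/q,\quad |\eta|\le Q/(qP^3).
\]
The inner sum is a linear exponential sum perturbed by a phase of total variation $O(|\eta|P^3)=O(Q/q)$. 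By partial summation it is $\ll (1+Q/q)\min(P/q,\|q\lambda\|^{-1})$. Hence
\[
f_1\ll Q\min\bigl(P,\|q\lambda\|^{-1}\bigr).
\]
If $\|q\lambda\|\ge Q^{5/4}/P$, this is $\ll PQ^{-1/4}$, which suffices. Otherwise $|c_1\alpha-a''/q|\le Q^{5/4}/(qP)$ for some integer $a''$, so $\alpha$ lies within $Q^{5/4}/(|c_1|qP)\le Q^2/P$ of a rational with denominator dividing $|c_1|q$.

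Finally, take the common denominator $q_0=\lcm$ of the two denominators obtained. Then $q_0\mid |c_1|d_1q$, so $q_0\le HQ$, and after reducing by $\gcd(a,b,q_0)$ we have $(\alpha,\beta)$ within $Q^2/P$ and $Q/P^3$ of $(a/q_0,b/q_0)$. This places the point in some $\cM(q_0,a,b)$, contradicting $(\alpha,\beta)\in\cm$. This explains the normalization $H=|c_1d_1|$ in \eqref{eq:QH}.

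The main obstacle is the middle case: arranging the partial-summation bound so that the loss from the cubic perturbation (the factor $1+Q/q$) still leaves room, which requires the threshold $Q^{5/4}/P$ to lie below the width $Q^2/P$. Bookkeeping of the denominators $D$ and $|c_1|$ must also keep the final modulus at most $HQ$. If the crude partial-summation loss proves too large, we would instead apply \eqref{eq:weylinput} with the roles adjusted, or use a sharper van der Corput step on the inner sum.
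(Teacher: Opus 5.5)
Your proposal is correct and takes essentially the same route as the paper. It uses Dirichlet approximation to $d_1\beta$ together with the cubic Weyl inequality when the denominator exceeds $Q$; otherwise it splits into residue classes, removes the small cubic phase by partial summation, and bounds a geometric series, which works because $(\alpha,\beta)\notin\cM$ keeps $\|qc_1\alpha\|$ away from zero, with the combined denominator kept below $HQ$ by the choice $H=|c_1d_1|$. The differences are only organizational: the paper splits on whether $\beta$ lies in the union $\mathfrak U$ of $\beta$-intervals with denominators up to $DQ$ and reads off $\|qc_1\alpha\|\ge |qc_1|Q^2/P$ directly, which gives the stronger bound $P/Q$, whereas you work with the Dirichlet denominator of $d_1\beta$ itself and a threshold $Q^{5/4}/P$ argued by contradiction.
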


\begin{proof}
Let $\mathfrak U$ be the union of the intervals in the $\beta$ coordinate
\[
 |\beta-\dfrac{b}{q}|\leq \dfrac{Q}{P^3},\qquad
 1\leq q\leq DQ,\quad(b,q)=1.
\]
First, suppose that $\beta\notin\mathfrak U$. By Dirichlet approximation theorem, there exist coprime integers $b',q'$ such that
\[
 1\leq q'\leq \dfrac{P^3}{Q},\qquad
 |d_1\beta-\dfrac{b'}{q'}|\leq\dfrac{Q}{q'P^3}.
\]
If $q'\leq Q$, dividing by $d_1$ and reducing the resulting fraction yields coprime integers $b_0,q_0$ satisfying
\[
 q_0\leq |d_1|q'\leq DQ,
 \qquad
 |\beta-\frac{b_0}{q_0}|
 \leq \frac{Q}{|d_1|q'P^3}
 \leq \frac{Q}{P^3}.
\]
This contradicts $\beta\notin\mathfrak U$. Hence $q'>Q$. The upper bound $q'\leq P^3/Q$ also implies that the approximation error is at most $\frac{1}{(q')^{2}}$ and 
\[
 \dfrac{1}{q'}+\dfrac{1}{P}+\dfrac{q'}{P^3}\ll \dfrac{1}{Q}.
\]
Applying the Weyl estimate \eqref{eq:weylinput}, we obtain
\eqref{eq:minorpointwise}.

Now suppose that $\beta\in\mathfrak U$, and write
\[
\beta=\dfrac{b}{q}+\xi,
\qquad q\le DQ,\qquad |\xi|\le \frac{Q}{P^3}.
\]
If $\alpha$ were within $\frac{Q^2}{P}$ of a
rational number with denominator $|qc_1|$, then the rational number and $b/q$ would have a common denominator at most
$|qc_1|\leq HQ$. Thus $(\alpha,\beta) \in \cM$.
Consequently, outside $\cM$,
\begin{equation}\label{eq:lineardistance}
 \|qc_1\alpha\|\geq |qc_1|\frac{Q^2}{P}.
\end{equation}
Split $f_1$ into residue classes modulo $q$.
Since $x^3\equiv r^3\pmod q$ whenever
$x\equiv r\pmod q$, we have
\[
f_1(\alpha,\beta)
=
\sum_{r\bmod q}
e\left(\frac{bd_1r^3}{q}\right)
\sum_{\substack{x\in PI_1\cap\mathbb Z\\
                 x\equiv r\pmod q}}
e(c_1\alpha x+d_1\xi x^3).
\]

For each fixed residue class, writing $x=r+qk$,
the linear exponential factor forms a geometric
progression. Consequently, for any subinterval
$J\subseteq PI_1$, we have
\[
\left|
\sum_{\substack{x\in J\cap\mathbb Z\\
                 x\equiv r\pmod q}}
e(c_1\alpha x)
\right|
\ll
\min\left\{
\frac Pq+1,
\frac{1}{\|qc_1\alpha\|}
\right\}.
\]

Moreover, the total variation of
$e(d_1\xi x^3)$ on $PI_1$ is
$O(P^3|\xi|)$.
Partial summation therefore gives
\[
|f_1(\alpha,\beta)|
\ll
q(1+P^3|\xi|)
\min\left\{
\frac Pq+1,
\frac{1}{\|qc_1\alpha\|}
\right\}.
\]
The factor $1+P^3|\xi|$ bounds the total variation of
$e(\xi d_1x^3)$ on $PI_1$. Since $q\ll Q\ll P$, equation
\eqref{eq:lineardistance} gives
\[
 |f_1(\alpha,\beta)|
 \ll q(1+Q)\frac{P}{|qc_1|Q^2}
 \ll \frac{P}{Q},
\]
This implies \eqref{eq:minorpointwise} and completes the proof.
\end{proof}

Combining \eqref{eq:productnine} and \eqref{eq:minorpointwise} with
trivial estimates for the remaining factors, we have
\begin{equation}\label{eq:minorintegral}
 \int_{\cm}\left|\prod_{j=1}^s f_j(\alpha,\beta)\right|
       \,d\alpha\,d\beta
 \ll P^{s-4+\varepsilon}Q^{-1/4}.
\end{equation}
The fixed-target phase in \eqref{eq:orthog} has absolute value one and
therefore does not affect this estimate.

\subsection{Complete sums and major-arc approximation}\hfill

Define
\begin{align}
 S_j(q,a,b)&=\sum_{r\bmod q}
          e\bigl(\dfrac{ac_jr+bd_jr^3}{q}\bigr),\label{eq:Sj}\\
 V_j(z,w)&=\int_{I_j}e(c_jzy+d_jwy^3)\,dy.\label{eq:Vj}
\end{align}
Suppose that $gcd(a,b,q)=1$, and let $g_j=\gcd(q,ac_j,bd_j)$.
For each prime $p\mid q$, at least one of $a,b$ is coprime to $p$. Thus, $v_p(g_j)\le \max{(v_p(c_j),v_p(d_j))}$,
and therefore $g_j\mid |c_jd_j|$. 
Let
\[
 q_j=\dfrac{q}{g_j},\qquad a_j'=\dfrac{ac_j}{g_j},\qquad b_j'=\dfrac{bd_j}{g_j}.
\]
Then $\gcd(q_j,a_j',b_j')=1$. As the summand in \eqref{eq:Sj} is periodic modulo $q_j$,
\[
 S_j(q,a,b)
 =g_j\sum_{r\bmod q_j}
 e\bigl(\dfrac{a_j'r+b_j'r^3}{q_j}\bigr).
\]
Applying \eqref{eq:completeinput} and $g_j\mid |c_jd_j|$, we obtain
\begin{equation}\label{eq:Sbound}
 S_j(q,a,b)\ll_{\varepsilon}g_jq_j^{2/3+\varepsilon}\ll_{c_j,d_j,\varepsilon}q^{2/3+\varepsilon}.
\end{equation}
The bound is uniform in $q,a,b$, without the assumption of coprimality between $q$ and $c_jd_j$.

On a major arc, write 
\[\alpha=\dfrac{a}{q}+\eta,\qquad \beta=\dfrac{b}{q}+\xi.
\]
Decomposing $f_j$ into residue classes modulo $q$ and approximating each sum by  its integral gives
\begin{equation}\label{eq:majorapprox}
 f_j(\alpha,\beta)
 =\dfrac{1}{q}S_j(q,a,b)P V_j(P\eta,P^3\xi)
   +O\bigl(q(1+P|\eta|+P^3|\xi|)\bigr).
\end{equation}
This follows from the elementary estimate
\[
 \sum_{\substack{x\in PI_j\cap\Z\\x\equiv r\ (q)}}w(x)
 =q^{-1}\int_{PI_j}w(t)\,dt
       +O\left(1+\int_{PI_j}|w'(t)|\,dt\right),
\]
applied to $w(t)=e(c_j\eta t+d_j\xi t^3)$ and summed over residues.
Under the bounds in \eqref{eq:majordef}, the error in \eqref{eq:majorapprox} is $O(Q^3)$. Denote the main term in \eqref{eq:majorapprox} by $f_j^*$. Both $f_j$ and $f_j^*$ are $O(P)$. Then the telescoping identity for the difference of two products gives, uniformly on $\cM$,
\[
 \left|\prod_{j=1}^s f_j-\prod_{j=1}^s f_j^*\right|
 \ll \sum_{j=1}^s |f_j-f_j^*|P^{s-1}
 \ll P^{s-1}Q^3.
\]
By \eqref{eq:majorvolume}, the error after integrating over $\cM$ is 
\begin{equation}\label{eq:approxerror}
 O(P^{s-1}Q^3\operatorname{meas}(\cM))
 =O(P^{s-5}Q^9).
\end{equation}
Define
\begin{equation}\label{eq:Aq}
 \mathcal A(q;u,v)=q^{-s}
 \sum_{\substack{a,b\bmod q\\(a,b,q)=1}}
 e\bigl(-\dfrac{ua+vb}{q}\bigr)\prod_{j=1}^s S_j(q,a,b),
\end{equation}
with $\mathcal A(1;u,v)=1$, and let
\begin{equation}\label{eq:Jtrunc}
 J_Q(P)=\int_{-Q^2}^{Q^2}\int_{-Q}^{Q}
 e(-\dfrac{uz}{P}-\dfrac{vw}{P^3})\prod_{j=1}^s V_j(z,w)\,dw\,dz.
\end{equation}
On each major arc $\mathfrak M(q,a,b)$, 
\[
e(-u\alpha-v\beta)
=
e\left(-\frac{ua+vb}{q}\right)e(-u\eta-v\xi),
\]
and
\[
\prod_{j=1}^s f_j^*(\alpha,\beta)
=
\frac{P^s}{q^s}
\prod_{j=1}^s S_j(q,a,b)
\prod_{j=1}^s V_j(P\eta,P^3\xi).
\]
Making the change of variables
$z=P\eta$ and $w=P^3\xi$, so that
$d\alpha\,d\beta=P^{-4}\,dz\,dw$, gives
\[
\begin{aligned}
&\int_{\mathfrak M(q,a,b)}
e(-u\alpha-v\beta)
\prod_{j=1}^s f_j^*(\alpha,\beta)
\,d\alpha\,d\beta\\
&\quad=
\frac{P^{s-4}}{q^s}
e\left(-\frac{ua+vb}{q}\right)
\prod_{j=1}^s S_j(q,a,b)\,J_Q(P).
\end{aligned}
\]
Summing over all major arcs and applying
the integrated error estimate (5.19),
we obtain
\begin{align}\label{eq:majorintegral}
 &\int_{\cM}e(-u\alpha-v\beta)\prod_{j=1}^s f_j(\alpha,\beta)
          \,d\alpha\,d\beta\notag\\
 &\quad=P^{s-4}\left(\sum_{q\leq HQ}\mathcal A(q;u,v)\right)J_Q(P)
           +O(P^{s-5}Q^9).
\end{align}

\subsection{The singular integral}\hfill

Since $I_j=(\ell_j,h_j)$ with $0<\ell_j<h_j<\infty$, the phase
\[
\phi_j(y)=c_jzy+d_jwy^3
\]
satisfies
\[
\phi_j'(y)=c_jz+3d_jwy^2, \qquad \phi_j''(y)=6d_jwy.
\]
If $|z|\ge K|w|$, where $K$ is a sufficiently large fixed constant, then $|\phi_j'(y)|\gg |z|$ throughout $I_j$. The first-derivative estimate gives
\[
|V_j(z,w)|\ll \frac{1}{1+|z|}.
\]
If $|z|<K|w|$, then $|\phi_j''(y)|\gg |w|$ throughout $I_j$, and the second-derivative estimate gives
\[
|V_j(z,w)|\ll \frac{1}{(1+|w|)^{1/2}}.
\]
Combining these estimates with the trivial bound near the origin, we obtain
\begin{equation}\label{eq:Vdecay}
 |V_j(z,w)|\ll \dfrac{1}{(1+|z|+|w|)^{1/2}}.
\end{equation}
Define
\begin{equation}\label{eq:singint}
 \fJ(\cB)=\int_{\R^2}\prod_{j=1}^s V_j(z,w)\,dw\,dz
\end{equation}
For $s\geq10$, the integrand satisfies
$$\left|\prod_{j=1}^{s}V_j(z,w)\right|
\ll \frac{1}{(1+|z|+|w|)^5}.$$
Thus, \eqref{eq:singint} converges absolutely and
$$\int_{\mathbb R^2}
(|z|+|w|)
\left|\prod_{j=1}^{s}V_j(z,w)\right|
\,dw\,dz<\infty.$$
Outside the rectangle $[-Q^2,Q^2]\times[-Q,Q]$, we have $|z|+|w|\ge Q$, so the contribution from this region is $O(\frac{1}{Q^3})$. Moreover,
$$
\left|
e\left(-\frac{uz}{P}-\frac{vw}{P^3}\right)-1
\right|
\ll_{u,v}\frac{|z|+|w|}{P}.
$$
Then the preceding moment bound yields
\begin{equation}\label{eq:Japprox}
 J_Q(P)=\fJ(\cB)+O(\dfrac{1}{Q^3}+\dfrac{1}{P}).
\end{equation}

To prove positivity, define
\[
 \Phi(\boldsymbol{y})=(L(\boldsymbol{y}),C(\boldsymbol{y})),
\]
and let $\mu$ be the pushforward of Lebesgue measure on
$\cB$ under $\Phi$. With convention, 
\[
\widehat\mu(z,w)=\int_{\mathbb R^2}e(zt_1+wt_2)\,d\mu(t_1,t_2),
\]
Fubini’s theorem implies
\[
 \widehat\mu(z,w) =\int_{\cB}e\bigl(zL(\boldsymbol{y})+wC(\boldsymbol{y})\bigr)
       \,d\boldsymbol{y}
 =\prod_{j=1}^sV_j(z,w).
\]
By \eqref{eq:Vdecay}, \(\widehat\mu\in L^1(\mathbb R^2)\), then the Fourier inversion shows that $\mu$ has a
continuous density $\rho$, with
$$\rho(0,0)=\fJ(\cB).$$
By the real hypothesis in Theorem~\ref{thm:analytic}, there is a point $\boldsymbol{y}_0$ in the interior of $\cB$ such that
\[
\Phi(\boldsymbol{y}_0)=(0,0), \qquad \text{rank } D\,\Phi(\boldsymbol{y}_0)=2.
\]
After relabeling coordinates, we may assume that the Jacobian minor corresponding to $y_1,y_2$ is nonzero. Then the map
\[
 \Psi(\boldsymbol{y})=
   \bigl(\Phi(\boldsymbol{y}),y_3,\ldots,y_s\bigr)
\]
is a local diffeomorphism at $\boldsymbol{y}_0$. 

Choose a neighborhood $U$ of $\boldsymbol{y_0}$, contained in $\cB$, on which $\Psi$ is a diffeomorphism. Its image contains a product $N\times W$, where $N$ is a neighborhood of $(0,0)$ and $W$ is a positive volume box in the remaining coordinates. Restricting to $\Psi^{-1}(N\times W)$, the change-of-variable formula gives the density 
\[
\rho_U(t_1,t_2)=\int_{W}|\det D \Psi^{-1}(t_1,t_2, \mathbf t')|\,d\mathbf t'
\]
where \(\mathbf t'=(t_3,\ldots,t_s)\) represents the remaining coordinates. The density is continuous and strictly positive on $N$. As $\rho\geq\rho_U$ on $N$, it follows that 
\begin{equation}\label{eq:Jpositive}
 \fJ(\cB)=\rho(0,0)>0.
\end{equation}

\subsection{The singular series and its local factors}\hfill

Applying \eqref{eq:Sbound} with $\dfrac{\varepsilon}{s}$ in place of $\varepsilon$ and summing over at most $q^2$ pairs $(a,b)$, we have 
\begin{equation}\label{eq:Aqbound}
 |\mathcal A(q;u,v)|\ll q^{2-s/3+\varepsilon}
                    \leq q^{-4/3+\varepsilon}.
\end{equation} Choosing $0<\varepsilon<1/3$, we have
\[
\sum_{q=1}^{\infty}|\mathcal A(q;u,v)|
\ll
\sum_{q=1}^{\infty}q^{-4/3+\varepsilon}
<\infty.
\]
Thus the singular series
\begin{equation}\label{eq:singseries}
\mathfrak S(u,v)
=
\sum_{q=1}^{\infty}\mathcal A(q;u,v)
\end{equation}
converges absolutely. Moreover, by the integral test,
\[
\sum_{q>HQ}|\mathcal A(q;u,v)|
\ll
\sum_{q>HQ}q^{-4/3+\varepsilon}
\ll Q^{-1/3+\varepsilon},
\]
since $H$ is fixed. Consequently,
\begin{equation}\label{eq:Sapprox}
\sum_{q\le HQ}\mathcal A(q;u,v)
=
\mathfrak S(u,v)+O(Q^{-1/3+\varepsilon}).
\end{equation}

Let $N(q;u,v)$ be the number of solutions of $L(\bx)\equiv u$,
$C(\bx)\equiv v\pmod q$. By orthogonality of additive characters,
\[
\begin{aligned}
q^{-(s-2)}N(q;u,v)
&=q^{-s}\sum_{a,b\bmod q}
e\left(-\frac{ua+vb}{q}\right)
\prod_{j=1}^s S_j(q,a,b).
\end{aligned}
\]
For each pair $(a,b)$, write
$g=\gcd(a,b,q)$ and $d=q/g$.
Then $a=ga'$, $b=gb'$,
$\gcd(a',b',d)=1$, and
\[
S_j(q,a,b)=gS_j(d,a',b').
\]
Grouping the frequencies according to $d\mid q$
therefore gives
\begin{equation}\label{eq:divisoridentity}
q^{-(s-2)}N(q;u,v)
=\sum_{d\mid q}\mathcal A(d;u,v).
\end{equation}

If $\gcd(m,n)=1$, the Chinese remainder theorem
gives a bijection between solutions modulo $mn$
and pairs of solutions modulo $m$ and $n$. Hence
\[
N(mn;u,v)=N(m;u,v)N(n;u,v),
\]
so the left-hand side of
\eqref{eq:divisoridentity} is multiplicative in $q$.
By Mobius inversion,
\[
\mathcal A(q;u,v)
=
\sum_{d\mid q}
\mu(q/d)\,d^{-(s-2)}N(d;u,v),
\]
which is also multiplicative.
Consequently, absolute convergence yields,
\begin{equation}\label{eq:Euler}
 \fS(u,v)=\prod_p\sigma_p(u,v),\qquad
\sigma_p(u,v)=\sum_{k=0}^{\infty}\mathcal A(p^k;u,v)
 =\lim_{k\to\infty}p^{-(s-2)k}N(p^k;u,v),
\end{equation}
by taking $q=p^k$ in \eqref{eq:divisoridentity}.

To establish positivity of the local factors, we use the following form of Hensel's lifting Lemma, which only requires a
$p$-adic solution with Jacobian rank two over $\mathbb Q_p$.
This weaker condition is essential at $p=3$, where the cubic
gradient vanishes modulo $3$, so no solution can be
nonsingular modulo $3$.

\begin{lemma}\label{lem:densitypositive}
Let \(F_1,F_2\in\mathbb Z[x_1,\ldots,x_s]\), and suppose that
\[
F_1(\mathbf x^{(0)})=F_2(\mathbf x^{(0)})=0
\]
for some \(\mathbf x^{(0)}\in\mathbb Z_p^s\). Suppose that a two-column minor of the Jacobian at \(\mathbf x^{(0)}\) has finite valuation \(h\). Then, for every \(k\ge2h+1\), the number of solutions modulo \(p^k\) is at least
\[
p^{(s-2)(k-2h-1)}.
\]
In particular, its normalized local density, if it exists, is positive.
\end{lemma}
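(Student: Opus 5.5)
We reduce to a single variable pair and use Hensel's lemma in its strong form (the Newton-type lemma with valuation $h$ of the derivative). Relabel so that the nonzero minor occupies columns $1,2$. Write $\mathbf x=(x_1,x_2,\mathbf x')$ and let $M(\mathbf x)$ be the $2\times2$ Jacobian block in $x_1,x_2$, so $v_p(\det M(\mathbf x^{(0)}))=h$. The plan is to show that every $\mathbf x'\equiv \mathbf x'^{(0)}\pmod{p^{h+1}}$ extends to a $p$-adic solution $(x_1,x_2,\mathbf x')$ with $(x_1,x_2)\equiv(x_1^{(0)},x_2^{(0)})\pmod{p^{h+1}}$. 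Reducing modulo $p^k$ then gives distinct solutions for distinct residues of $\mathbf x'$ modulo $p^k$ in that class. There are $p^{(s-2)(k-h-1)}\ge p^{(s-2)(k-2h-1)}$ of these residues, which proves the count.

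\textbf{Step 1: two-variable Hensel.} Fix $\mathbf x'$ in the class above and set $G(y_1,y_2)=(F_1,F_2)(y_1,y_2,\mathbf x')$. Since $G$ has $\Z_p$-coefficients, Taylor expansion gives
\[
 G(\mathbf y^{(0)})\equiv G(\mathbf x^{(0)})=0\pmod{p^{h+1}},
\]
hence $v_p(G(\mathbf y^{(0)}))\ge h+1$. It does not directly give $\ge 2h+1$, which is where the $2h$ slack comes in. To get it, restrict instead to $\mathbf x'\equiv\mathbf x'^{(0)}\pmod{p^{2h+1}}$. Then $v_p(G(\mathbf y^{(0)}))\ge 2h+1$, and the Jacobian $M$ at $(\mathbf y^{(0)},\mathbf x')$ still has $\det$ of valuation exactly $h$, since it changed by a multiple of $p^{2h+1}$ and $2h+1>h$. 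The multivariable strong Hensel lemma (Newton iteration $\mathbf y\mapsto\mathbf y-M^{-1}G$, using $M^{-1}=\operatorname{adj}M/\det M$ with $v_p(M^{-1})\ge -h$) now yields a root $\mathbf y\in\Z_p^2$ with $\mathbf y\equiv\mathbf y^{(0)}\pmod{p^{h+1}}$. The iteration converges quadratically: if $v_p(G)=e>2h$, the correction has valuation $\ge e-h>h$, and the new residual has valuation $\ge 2(e-h)>e$. This also keeps $v_p(\det M)=h$ throughout.

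\textbf{Step 2: counting.} The residues of $\mathbf x'$ modulo $p^k$ congruent to $\mathbf x'^{(0)}$ modulo $p^{2h+1}$ number $p^{(s-2)(k-2h-1)}$. Each yields a genuine solution in $\Z_p^s$, hence a solution modulo $p^k$. Distinct residues give distinct reductions because their $\mathbf x'$-coordinates already differ modulo $p^k$. This proves the bound.

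\textbf{Step 3: density.} Dividing by $p^{(s-2)k}$ gives $p^{-(s-2)k}N(p^k)\ge p^{-(s-2)(2h+1)}>0$ uniformly in $k$. So any limit, such as $\sigma_p$ in \eqref{eq:Euler}, is positive.

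The main obstacle is Step 1, namely getting a clean two-variable strong Hensel statement with the invariance of $v_p(\det M)$ under the Newton iteration. I would prove it directly by the quadratic-convergence estimate above rather than cite it.
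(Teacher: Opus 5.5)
Your proof, as carried out in Steps 1--3 with the free coordinates restricted to $\mathbf x'\equiv\mathbf x'^{(0)}\pmod{p^{2h+1}}$, is correct and is essentially the paper's argument: Newton iteration in $(x_1,x_2)$ starting from residual valuation $2h+1$ keeps $v_p(\det)$ equal to $h$, and you then count distinct free residues modulo $p^k$. You should delete the opening claim that congruence modulo $p^{h+1}$ already suffices, together with the count $p^{(s-2)(k-h-1)}$, because it is false in general: for $p$ odd, take $F_1=x_1$, $F_2=x_2^2-x_3$ at $(0,p,p^2)$, where $h=1$, and then $x_3=p^2u$ with $u$ a nonsquare unit satisfies $x_3\equiv p^2\pmod{p^2}$ but admits no solution.
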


\begin{proof}
Relabel the variables so that the Jacobian minor formed by the first two columns has
valuation $h$. Choose arbitrary $y_3,\ldots,y_s\in\Z_p$ satisfying
\[
 y_i\equiv x_i^{(0)}\pmod {p^{2h+1}},
 \qquad3\leq i\leq s.
\]
With these coordinates fixed, write $\boldsymbol{x}'=(x_1, x_2)$ and define
\[
G(\boldsymbol{x}')=
\begin{pmatrix}
F_1(x_1,x_2,y_3,\ldots,y_s)\\
F_2(x_1,x_2,y_3,\ldots,y_s)
\end{pmatrix},
\]
and let \(J(\boldsymbol{x}')\) be its \(2\times2\) Jacobian matrix.
At the initial point $\boldsymbol{x}'^{(0)}=(x_1^{(0)},x_2^{(0)})$,
we have $G(\boldsymbol{x}'^{(0)})\in p^{2h+1}\mathbb Z_p^2$. Moreover, the determinant of \(J(\boldsymbol{x}'^{(0)})\) is congruent modulo \(p^{2h+1}\) to the original minor, so its valuation remains \(h\).

Suppose that, at some stage, $G(\boldsymbol{x}')\in p^m\Z_p^2$ with $m>2h$ and $v_p(\det J(\boldsymbol{x}'))=h$. 
The Newton correction is
$\boldsymbol{\delta}=-J(\boldsymbol{x}')^{-1}G(\boldsymbol{x}'),$
Since $J^{-1}=\operatorname{adj}(J)/\det J$ and the adjugate has entries in \(\mathbb Z_p\), $\boldsymbol\delta\in p^{m-h}\mathbb Z_p^2$.
Taylor expansion gives
\[
G(\boldsymbol{x}'+\boldsymbol{\delta})\in p^{2(m-h)}\mathbb Z_p^2.
\]
Because \(m-h\ge h+1\), every entry of
$J(\mathbf x'+\boldsymbol\delta)-J(\mathbf x')$
is divisible by \(p^{h+1}\). Consequently,
$\det J(\mathbf x'+\boldsymbol\delta)
\equiv\det J(\mathbf x')\pmod{p^{h+1}}$, so the determinant retains valuation \(h\).

Starting with \(m_0=2h+1\), we may repeat the Newton step with $m_{n+1}=2(m_n-h)$. Since \(m_n=2h+2^n\), the residuals tend to zero and the
corrections tend to zero in the \(p\)-adic metric. The iterations therefore converge in \(\mathbb Z_p^2\) to a zero of \(G\).

For $k\geq2h+1$, each free coordinate has
$p^{k-(2h+1)}$ possible residues modulo $p^k$ in its prescribed
congruence class. Thus, there are
$p^{(s-2)(k-2h-1)}$ choices for the free residues. Choose a \(p\)-adic
representative of each choice and apply the lifting
argument above. The resulting solutions are distinct
modulo \(p^k\), since their free coordinates are distinct.
This proves the lower bound.

If \(N_F(p^k)\) denotes the number of solutions modulo
\(p^k\), then
$$
\frac{N_F(p^k)}{p^{(s-2)k}}
\ge\frac{1}{p^{(s-2)(2h+1)}}.
$$
Thus the normalized local density is positive whenever the limit exists.
\end{proof}

Applying Lemma \ref{lem:densitypositive} to \(F_1=L-u\) and \(F_2=C-v\), under condition $(1)$ of Theorem~\ref{thm:analytic}, we obtain $\sigma_p(u,v)>0$ for every $p$. Moreover, absolute convergence of \eqref{eq:singseries} gives
\[
\sum_p|\sigma_p(u,v)-1|
\le
\sum_p\sum_{k=1}^{\infty}
|\mathcal A(p^k;u,v)|
\le
\sum_{q=2}^{\infty}|\mathcal A(q;u,v)|
<\infty.
\]
Hence, the Euler product in
\eqref{eq:Euler} is strictly positive, so $\fS(u,v)>0$.

\subsection{Completion of the analytic proof}\hfill

Combining the minor-arc estimate \eqref{eq:minorintegral} with the major-arc formula \eqref{eq:majorintegral}, and using 
\eqref{eq:Japprox} and \eqref{eq:Sapprox}, we obtain
\begin{align}\label{eq:allerrors}
 R(P)&={}\fS(u,v)\fJ(\cB)P^{s-4}\notag\\
 &+O\bigl(P^{s-4+\varepsilon}Q^{-1/4}
       +P^{s-5}Q^9
        +P^{s-4}Q^{-1/3+\varepsilon}\notag
        +P^{s-4}Q^{-3}+P^{s-5}\bigr).
\end{align}
With $Q=P^{1/20}$ and, for example, a sufficiently small $\varepsilon<1/2000$, each error term is $O(P^{s-4-1/100})$. Under conditions (1) and (2) of Theorem~\eqref{eq:generalthm}, we have \(\mathfrak J(\mathcal B)>0\) and \(\mathfrak S(u,v)>0\). Therefore, the leading constant is strictly positive, completing the proof of Theorem~\eqref{eq:generalthm}.

The requirement \(s\ge10\) arises in the minor-arc argument: one factor is controlled by the pointwise estimate \eqref{eq:minorpointwise}, the other nine are controlled by the ninth-moment estimate \eqref{eq:momentinput}, and any remaining factors are bounded trivially.

\section{Positive points on the homogeneous cone}\label{sec:cone}

We next establish the geometric input for bordering. These lemmas are
stated for an arbitrary number $s\geq10$ of active coordinates. Throughout this section, let \(m\ge s\) and
\(a_j,b_j\in\mathbb Z\setminus\{0\}\) for \(1\le j\le s\).
Write
\begin{equation}\label{eq:coneforms}
 L_0(\by)=\sum_{j=1}^s a_jy_j,\qquad
 C_0(\by)=\sum_{j=1}^s b_jy_j^3,\qquad a_jb_j\ne0,
\end{equation}
and
\[
 X_m=\{\by\in\CC^m:L_0(\by)=C_0(\by)=0\}.
\]
The last $m-s$ coordinates are free.

\subsection{Absolute irreducibility by polarization}

\begin{lemma}\label{lem:isotropic}
Let $b_1,\ldots,b_s\in\CC\setminus\{0\}$ and
$D(\by)=\sum_{j=1}^s b_jy_j^3$.
If a complex linear subspace $W\subseteq\CC^s$ satisfies $D|_W=0$,
then $\dim W\leq s/2$.
\end{lemma}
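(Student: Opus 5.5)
We prove this by polarization. Suppose $D|_W=0$. Since $D$ is a cubic form vanishing identically on the linear subspace $W$, its associated symmetric trilinear form
\[
 T(\bu,\bv,\bw)=\sum_{j=1}^s b_ju_jv_jw_j
\]
also vanishes identically on $W\times W\times W$. This follows from the polarization identity: $T(\bu,\bv,\bw)$ is a fixed rational linear combination of the values $D(\pm\bu\pm\bv\pm\bw)$, and every such argument lies in $W$.

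The idea is to use $T$ to build an auxiliary subspace that pairs nondegenerately with $W$. Fix some $\bw\in W$ and consider the symmetric bilinear form
\[
 B_{\bw}(\bu,\bv)=T(\bu,\bv,\bw)=\sum_j b_jw_ju_jv_j .
\]
Then $W$ is totally isotropic for $B_{\bw}$. A diagonal bilinear form of rank $r$ has maximal isotropic subspaces of dimension at most $(s-r)+r/2$, so we want a $\bw\in W$ for which $B_{\bw}$ has large rank. The rank of $B_{\bw}$ is exactly the number of nonzero coordinates of $\bw$.

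Next we choose $\bw$ generically. The set of coordinates that vanish identically on $W$ forms some index set $Z$; on all of $W$ those coordinates are zero. We may therefore replace $\CC^s$ by $\CC^{S}$ with $S=\{1,\dots,s\}\setminus Z$, where $W$ embeds and $D$ restricts to $\sum_{j\in S}b_jy_j^3$. So it suffices to treat the case where no coordinate vanishes identically on $W$, since then $\dim W\le |S|/2\le s/2$.

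In that case a generic $\bw\in W$ has all coordinates nonzero, because it avoids a finite union of proper hyperplanes of $W$. Then $B_{\bw}$ is a nondegenerate symmetric bilinear form on $\CC^s$, with diagonal entries $b_jw_j\neq0$. A totally isotropic subspace for a nondegenerate form satisfies $W\subseteq W^{\perp}$, and $\dim W^\perp=s-\dim W$, so $\dim W\le s/2$. This completes the argument.

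The only point requiring care is the reduction to the case with no identically vanishing coordinates. That reduction is harmless, since the restricted form is still diagonal with nonzero coefficients. The statement is only about complex subspaces, so no field issues arise.
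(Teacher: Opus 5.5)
Your proposal is correct and follows essentially the same route as the paper: polarize to see that $B_{\bw}(\bu,\bv)=\sum_j b_jw_ju_jv_j$ vanishes on $W\times W$ for a $\bw\in W$ whose coordinates off the identically vanishing set are all nonzero, then use nondegeneracy of this diagonal form on the support to get $W\subseteq W^\perp$ and hence $2\dim W\le s$. The differences are cosmetic: you use the signed eight-term polarization formula where the paper uses a seven-term identity, and you reduce to the support explicitly before choosing $\bw$.
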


\begin{proof}
Let $T$ be the set of coordinates that do not vanish identically on $W$. The case when $W = 0$ is immediate.
Choose $\bw\in W$ with $w_j\ne0$ for every $j\in T$; this is possible
because a finite union of proper linear subspaces does not cover a vector
space over an infinite field. For \(\bu,\bv\in W\), polarization gives
\begin{align*}
 D(\bw+\bu+\bv)-D(\bw+\bu)-D(\bw+\bv)-D(\bu+\bv)
 +D(\bw)+D(\bu)+D(\bv)
   =6\sum_{j=1}^s b_jw_ju_jv_j.
\end{align*}
All seven arguments on the left lie in $W$, so $D|_W=0$ gives
\[
 \sum_{j\in T}b_jw_ju_jv_j=0\qquad \text{for }\bu,\bv\in W.
\]

Every coordinate outside \(T\) vanishes on \(W\), so we may
identify \(W\) with a subspace of \(\mathbb C^T\).
On $\mathbb{C}^T$, the bilinear form
\[
B_{\bw}(\bu,\bv)
=\sum_{j\in T}b_jw_ju_jv_j
\]
is not degenerate, since its matrix in the standard basis of $\mathbb{C}^T$ is diagonal with nonzero entries $b_jw_j$ for $j \in T$.    
The preceding identity gives \(W\subseteq W^\perp\),
where the orthogonal complement is taken with respect to
\(B_{\mathbf w}\). Hence
\[
\dim W\le\dim W^\perp=|T|-\dim W,
\]
and we have \(2\dim W\le |T|\le s\).
\end{proof}

\begin{lemma}\label{lem:irreducible}
If $s\geq5$, the restriction of $C_0$ to the hyperplane $L_0=0$ in
$\CC^s$ is a nonzero absolutely irreducible cubic polynomial.
Consequently, $X_m$ is absolutely irreducible.
\end{lemma}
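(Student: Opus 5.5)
Since $a_s\ne0$, the first step is to parametrize the hyperplane $L_0=0$ by eliminating $y_s=-a_s^{-1}\sum_{j<s}a_jy_j$. This gives the cubic
\[
G(y_1,\ldots,y_{s-1})=\sum_{j=1}^{s-1}b_jy_j^3+b_s\Bigl(-a_s^{-1}\sum_{j=1}^{s-1}a_jy_j\Bigr)^3
\]
on $\CC^{s-1}$. It is nonzero because its $y_1^3$ coefficient is $b_1-b_sa_1^3/a_s^3$, and the $y_1^2y_2$ coefficient $-3b_sa_1^2a_2/a_s^3\ne0$ settles the case where the first vanishes. Irreducibility of $X_m$ then follows from irreducibility of $G$. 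Indeed, $X_m\cong\{G=0\}\times\CC^{m-s}$, and if $G$ is irreducible its zero set is an irreducible hypersurface, so the product with affine space is irreducible.

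To show $G$ is absolutely irreducible, I would argue by contradiction using Lemma~\ref{lem:isotropic}. A factorization of a cubic has a linear factor, so $G=\ell\cdot q$ with $\ell$ a nonzero linear form on $H=\{L_0=0\}\subset\CC^s$. Then $C_0$ vanishes identically on the subspace $W=H\cap\{\ell=0\}$, where $\ell$ is viewed as a linear form on $\CC^s$ via the identification above. The subspace $W$ has dimension $s-2$. Lemma~\ref{lem:isotropic} gives $s-2\le s/2$, i.e.\ $s\le4$, contradicting $s\ge5$. The lemma applies because all $b_j\ne0$.

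The one subtle point is reducibility with a repeated factor, $G=\ell^3$ or $\ell^2\ell'$. This is covered by the same argument, since a linear factor still exists. The only other requirement is that reducible means a nontrivial factorization over $\CC$ of a nonzero cubic, which forces a degree-one factor. I expect no real obstacle beyond confirming that "restriction nonzero" holds, which the coefficient computation above handles. Alternatively, nonvanishing follows from Lemma~\ref{lem:isotropic} applied to $W=H$, since $s-1>s/2$ for $s\ge3$.
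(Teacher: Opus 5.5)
Your proposal is correct and follows essentially the same route as the paper: nonvanishing and irreducibility of $C_0|_H$ both come from Lemma~\ref{lem:isotropic} (applied to $H$, and to $\ker\ell\subseteq H$ of dimension $s-2>s/2$), and irreducibility of $X_m$ follows because the last $m-s$ coordinates are free. The explicit elimination of $y_s$ and the coefficient check are a concrete version of the same argument. The one step you leave implicit, that a factor of the homogeneous cubic $G$ may be taken homogeneous so that $\{\ell=0\}$ is a linear subspace, is standard and is also what the paper uses.
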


\begin{proof}
Let $H=V(L_0) \subseteq \mathbb{C}^s$. Since $\dim H=s-1>s/2$, Lemma~\ref{lem:isotropic} implies $C_0|_H\ne0$.
Suppose $C_0|_H$ is reducible. Being homogeneous of degree
three, it has a homogeneous factor of degree one, say $\ell$.
Then $C_0$ vanishes on the linear subspace
$\ker\ell\subseteq H$, whose dimension is $s-2>s/2$
for $s\ge5$, contradicting Lemma~\ref{lem:isotropic}.

Let $X_s=V(L_0,C_0)\subseteq H$. Since $C_0|_H$ is irreducible and $H\cong\mathbb C^{s-1}$, the ideal $(C_0|_H)$ is prime, so
\[
\mathbb C[X_s]\cong\mathbb C[H]/(C_0|_H)
\]
is an integral domain. The remaining $m-s$ coordinates are free, so
\[
\mathbb C[X_m]\cong\mathbb C[X_s][t_1,\ldots,t_{m-s}]
\]
is also an integral domain. Hence $X_m$ is irreducible over
$\mathbb C$, proving absolute irreducibility.
\end{proof}

We shall also use the following elementary form of real Zariski density.

\begin{lemma}\label{lem:realpatch}
Let $F\in\R[z_1,\ldots,z_N]$ be absolutely irreducible, and let $U$ be a nonempty relatively Euclidean open subset of the nonsingular real locus of $F = 0$. If \(G\in\mathbb R[z_1,\ldots,z_N]\) has a nonzero class modulo \(F\), then \(G\) does not vanish throughout \(U\).
\end{lemma}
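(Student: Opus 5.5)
Assume, for contradiction, that $G$ vanishes identically on $U$. I would show that $F$ then divides $G$ in $\R[z_1,\ldots,z_N]$, so the class of $G$ modulo $F$ is zero.

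\emph{Step 1: local parametrization.} Pick a point $\bx_0\in U$. By nonsingularity, some partial derivative, say $\partial F/\partial z_N$, is nonzero at $\bx_0$. The real implicit function theorem gives an open set $V\subseteq\R^{N-1}$ and a real-analytic function $\phi$ on $V$ with the following property. Near $\bx_0$, the real zero set of $F$ is exactly the graph $\{(\bu,\phi(\bu)):\bu\in V\}$. After shrinking $V$, this graph lies in $U$. So $G(\bu,\phi(\bu))=0$ for all $\bu\in V$, and also $F(\bu,\phi(\bu))=0$.

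\emph{Step 2: pass to a resultant.} View $F$ and $G$ as polynomials in $z_N$ over $\R[z_1,\ldots,z_{N-1}]$. Note that $F$ has positive degree in $z_N$, since $\partial F/\partial z_N\not\equiv0$.

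\begin{itemize}
\item If $F$ does not divide $G$ in $\CC[\bz]$, then $F$ and $G$ have no common factor, because $F$ is irreducible over $\CC$. Gauss's lemma then says they have no common factor of positive $z_N$-degree over $\CC(z_1,\ldots,z_{N-1})$.
\item Hence the resultant $R=\operatorname{Res}_{z_N}(F,G)\in\R[z_1,\ldots,z_{N-1}]$ is a nonzero polynomial.
\item We can write $R=AF+BG$ with polynomials $A,B$. Substituting $z_N=\phi(\bu)$ gives $R(\bu)=0$ for all $\bu$ in the open set $V$.
\item A polynomial vanishing on a nonempty open subset of $\R^{N-1}$ is identically zero. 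This contradicts $R\neq0$.
\end{itemize}

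There is one caveat: $G$ might not involve $z_N$, or might have $z_N$-degree $0$. In that case $G(\bu,\phi(\bu))=G(\bu)$ already vanishes on $V$, so $G=0$, which contradicts the hypothesis directly.

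\emph{Step 3: descend to $\R$.} Step 2 shows $F\mid G$ in $\CC[\bz]$, say $G=FH$. Since $F$ and $G$ have real coefficients, comparing $G=FH$ with its complex conjugate gives $H=\bar H$. Equivalently, polynomial division by $F$ is performed over $\R$ once a leading coefficient of $F$ is inverted. Either way $H\in\R[\bz]$, so the class of $G$ modulo $F$ is zero. This contradicts the hypothesis, and the lemma follows.

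The only delicate point is Step 2, specifically justifying coprimality over $\CC$ from absolute irreducibility. That is where irreducibility over $\CC$, and not merely over $\R$, is used. The rest is routine.
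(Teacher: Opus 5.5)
Your proof is correct and follows essentially the same route as the paper. Both use the implicit function theorem to write a patch of $U$ as a graph over an open subset of $\R^{N-1}$, and both then produce an identity $AF+BG=h$ with $0\ne h\in\R[z_1,\ldots,z_{N-1}]$ that is forced to vanish on that open set. You obtain $h$ as the resultant; the paper runs the Euclidean algorithm and clears denominators.

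The detour through $\CC$ in Steps 2--3 is harmless but unnecessary, and your closing remark is not quite right. Irreducibility over $\R$ already suffices, because the hypothesis says exactly that $F\nmid G$ in $\R[z_1,\ldots,z_N]$. So Gauss's lemma gives coprimality in $\R(z_1,\ldots,z_{N-1})[z_N]$ directly, which is how the paper argues.
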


\begin{proof}
Suppose that \(G\) vanishes on \(U\). After restricting
to a smaller patch and relabelling the coordinates,
we may assume \(\partial F/\partial z_N\ne0\) throughout
the patch. The implicit function theorem expresses it
as a graph over a nonempty open subset of
\(\mathbb R^{N-1}\).

The polynomial \(F\) is irreducible over \(\mathbb R\)
and has positive degree in \(z_N\). It is therefore
primitive as a polynomial in \(z_N\) over
\(\mathbb R[z_1,\ldots,z_{N-1}]\).
Since \(F\) does not divide \(G\), Gauss's lemma shows
that they are coprime in
\(\mathbb R(z_1,\ldots,z_{N-1})[z_N]\).
The Euclidean algorithm, followed by clearing denominators,
gives polynomials \(A_0,B_0\in\mathbb R[z_1,\ldots,z_N]\)
and a nonzero polynomial
\(h\in\mathbb R[z_1,\ldots,z_{N-1}]\) such that
\[
A_0F+B_0G=h.
\]
Both \(F\) and \(G\) vanish on the graph, so \(h\)
vanishes on a nonempty open subset of \(\mathbb R^{N-1}\), which contradicts $h\ne 0$.
\end{proof}

\subsection{A nonsingular point among the positive rows}

\begin{lemma}\label{lem:positiverow}
Let $E$ be a positive real $(m-1)\times m$ matrix of rank $m-1$. Suppose that the row spaces of $E$ and $\cube E$ are respectively the hyperplanes defined by $L_0=0$ and
$\sum_{j=1}^s b_jy_j=0$, with the $b_j$'s as in
\eqref{eq:coneforms}. If $s\geq3$, at least one row of $E$ is a
nonsingular point of $X_m$.
\end{lemma}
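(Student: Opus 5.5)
The plan is to show that every row of $E$ lies on $X_m$, then characterize the singular points of $X_m$ that have positive coordinates, and finally use a dimension count to show that not every row can be singular.

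\emph{Every row lies on $X_m$.} Let $\be$ denote a row of $E$; it has positive real entries. Since the row space of $E$ is the hyperplane $L_0=0$, we have $L_0(\be)=0$. The corresponding row of $\cube E$ is $(e_1^3,\ldots,e_m^3)$. It lies in the hyperplane $\sum_{j\le s} b_jy_j=0$, so $C_0(\be)=\sum_{j\le s}b_je_j^3=0$. Hence $\be\in X_m$.

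\emph{Singular points with positive coordinates.} The Jacobian of $(L_0,C_0)$ at $\by$ has rows $(a_1,\ldots,a_s,0,\ldots,0)$ and $(3b_1y_1^2,\ldots,3b_sy_s^2,0,\ldots,0)$. The first row is nonzero, so $\by$ is singular exactly when there is a $\lambda\in\R$ with $b_jy_j^2=\lambda a_j$ for all $1\le j\le s$. Suppose $\by$ has positive coordinates. Then $\lambda\neq0$, because $b_jy_j^2\neq0$. Moreover $y_j^2=\lambda a_j/b_j$ together with $y_j>0$ forces
\[
y_j=|\lambda|^{1/2}w_j,\qquad w_j:=|a_j/b_j|^{1/2}\qquad(1\le j\le s).
\]
Thus the first $s$ coordinates of any positive singular point of $X_m$ lie on the fixed real line $\R\bw$, where $\bw=(w_1,\ldots,w_s)$ does not depend on the point.

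\emph{Dimension count.} Suppose, for contradiction, that every row of $E$ is a singular point of $X_m$. Let $\pi:\R^m\to\R^s$ be the projection onto the first $s$ coordinates. By the previous step, $\pi$ maps every row into $\R\bw$. By the rank hypothesis the rows span the hyperplane $\{L_0=0\}\subseteq\R^m$, so
\[
\pi(\{L_0=0\})\subseteq\R\bw .
\]
On the other hand, $L_0$ involves only the first $s$ coordinates. Therefore $\pi(\{L_0=0\})$ is the hyperplane $\{\sum_{j\le s}a_jy_j=0\}$ in $\R^s$, which has dimension $s-1\ge2$ because $s\ge3$. A space of dimension at least two cannot lie inside a line, which is a contradiction. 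Hence some row of $E$ is a nonsingular point of $X_m$.

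\emph{Main point of care.} The only delicate step is the second one, where positivity is used. Positivity excludes $\lambda=0$, and it removes the sign ambiguity in the square roots, so that all singular rows are forced onto a single line rather than onto finitely many lines related by sign changes. The remaining steps are linear algebra.
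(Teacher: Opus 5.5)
Your proof is correct and follows the paper's argument: positivity forces $\lambda\ne0$ and removes the sign ambiguity, so all singular rows have first $s$ coordinates on one fixed line, and a dimension count then contradicts the rank hypothesis. The only cosmetic difference is in that count. The paper bounds $\operatorname{rank}E\le 1+(m-s)$ using the first $s$ columns. You instead project the row space $\{L_0=0\}$ onto the first $s$ coordinates and note that its image has dimension $s-1\ge 2$; these are the same count.
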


\begin{proof}
Let $y$ be a singular row of $E$. Then $\nabla C_0(\by)$ and
$\nabla L_0(\by)$ are linearly dependent. Since both are nonzero,
there is a nonzero real number \(\lambda\) such that
\begin{equation}\label{eq:singularrow}
3b_jy_j^2=\lambda a_j
\qquad \text{for } 1\le j\le s.
\end{equation}
For two singular rows $y,\tilde y$, with corresponding
parameters $\lambda,\tilde\lambda$, we have
\[
\frac{y_j^2}{\tilde y_j^2}
=\frac{\lambda}{\tilde\lambda}>0,
\qquad 1\le j\le s.
\]
Since the rows are positive, taking square roots shows that
their first $s$ coordinates are proportional. Thus if every row of $E$ were singular, the submatrix consisting of the
first $s$ columns of $E$ would have rank at most one. Since
the remaining $m-s$ columns contribute at most $m-s$ to the rank,
\[
m-1=\operatorname{rank}E\le1+(m-s)\le m-2,
\]
where the last inequality uses $s\ge3$. This is a contradiction.
\end{proof}

\begin{lemma}\label{lem:conecount}
Assume $s\geq10$ and that $X_m$ contains a positive integral point
where the Jacobian of $(L_0,C_0)$ has rank two.
Let \(\mathcal U=\prod_{j=1}^{m}I_j\) be any positive product box containing a nonsingular
real point of $X_m$. Then
\[
 \#\{\by\in\Z^m:\by/P\in\mathcal U,\ L_0(\by)=C_0(\by)=0\}
 =\kappa_{\mathcal U}P^{m-4}
       +O_{\mathcal U}(P^{m-4-1/100}),
 \qquad \kappa_{\mathcal U}>0.
\]
\end{lemma}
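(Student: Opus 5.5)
We apply Theorem~\ref{thm:analytic} to the homogeneous system $L_0(\by)=0$, $C_0(\by)=0$, taking $u=v=0$. The theorem is stated with $s$ active variables in which every $c_j,d_j$ is nonzero, whereas here only the first $s$ of the $m$ coordinates appear. The remaining $m-s$ coordinates are free and range over the intervals $PI_j$ for $s<j\le m$. Summing over them multiplies the count for the first $s$ coordinates by
\[
\prod_{j=s+1}^m\bigl(P|I_j|+O(1)\bigr)=P^{m-s}\prod_{j>s}|I_j|+O(P^{m-s-1}).
\]
The count in the first $s$ coordinates over the box $\prod_{j\le s}I_j$ is $\fS(0,0)\fJ P^{s-4}+O(P^{s-4-1/100})$ by Theorem~\ref{thm:analytic}, since $s\ge10$. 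Multiplying the two gives the asymptotic $\kappa_{\mathcal U}P^{m-4}+O(P^{m-4-1/100})$ with
\[
\kappa_{\mathcal U}=\fS(0,0)\,\fJ\bigl(\textstyle\prod_{j\le s}I_j\bigr)\prod_{j>s}|I_j|.
\]
It therefore remains to show that both factors of the analytic constant are positive, by checking conditions (1) and (2) of Theorem~\ref{thm:analytic}.

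For condition (2), take the nonsingular real point $\by_0\in\mathcal U$ of $X_m$ supplied by the hypothesis. The Jacobian of $(L_0,C_0)$ involves only the first $s$ coordinates, so the projection of $\by_0$ to those coordinates is a real zero in $\prod_{j\le s}I_j$ with Jacobian rank two. The positivity argument for the singular integral in Section~\ref{sec:analytic} needs this point in the interior of the box. That holds here because the intervals $I_j$ are open, so $\fJ>0$ by \eqref{eq:Jpositive}.

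For condition (1), the hypothesis gives a positive integral point $\by^*\in X_m$ at which the Jacobian has rank two. Its first $s$ coordinates form a point of $\Z^s\subseteq\Z_p^s$ solving $L_0=C_0=0$. Some $2\times2$ minor of the Jacobian there is a nonzero integer, so it has finite $p$-adic valuation $h$ for every prime $p$. Lemma~\ref{lem:densitypositive} then gives $\sigma_p(0,0)>0$ for every $p$, and the Euler-product argument following that lemma gives $\fS(0,0)>0$.

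There is no real obstacle, since this is a direct specialization. The only point needing care is the free coordinates, whose exact factorization handles them. The main subtlety is the homogeneous case $u=v=0$: the single global integral point gives nonsingular local solutions at every prime simultaneously, which replaces the prime-by-prime verification that was needed in Section~\ref{sec:local}.
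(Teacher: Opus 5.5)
Your proposal is correct and matches the paper's proof. Like the paper, you apply Theorem~\ref{thm:analytic} to the first $s$ coordinates with targets $(0,0)$, take the local condition at every prime from the single global integral point, take the real condition from the projected nonsingular point in $\mathcal U$, and multiply by the free-coordinate count $P^{m-s}\prod_{j>s}|I_j|+O(P^{m-s-1})$.
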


\begin{proof}
The given integral point is a solution of the homogeneous equations over every $\Z_p$. Some Jacobian minor is a nonzero integer, hence is nonzero over every $\Q_p$.

Apply Theorem~\ref{thm:analytic} to the first \(s\) coordinates, with targets \(0,0\) and box \(\prod_{j=1}^{s}I_j\). The local condition follows from the integral point above. The real condition follows from the nonsingular point in \(\mathcal U\), since both forms depend only on the first \(s\) coordinates. Thus, the count in these
coordinates is
$\kappa P^{s-4} +O_{\mathcal U}(P^{s-4-1/100}) \text{ for } \kappa>0.$
If \(m>s\), the remaining coordinates contribute
\[
\prod_{j=s+1}^{m}\#(\mathbb Z\cap PI_j)
=
P^{m-s}\prod_{j=s+1}^{m}|I_j|
+O_{\mathcal U}(P^{m-s-1}).
\]
Multiplying these counts gives the result, with
\(\kappa_{\mathcal U}
=\kappa\prod_{j=s+1}^{m}|I_j|>0\).
If \(m=s\), there are no extra coordinates and
\(\kappa_{\mathcal U}=\kappa\).
\end{proof}

\subsection{Simultaneous row relations with nonzero coefficients}

\begin{lemma}\label{lem:sharedrow}
Let $m\geq s\geq10$. Let $E$ be a positive integer $(m-1)\times m$ matrix for which both signed cofactor vectors have the same support $\{1,\ldots,s\}$. For every real $K$, there are an integral row $\br$ with all $r_j>K$ and $r_j>1$ for every $j$, and integral rows $\bu,\bv$ of length $m-1$, such that
\begin{equation}\label{eq:sharedrelations}
 \br=\bu E,\qquad \br^{[3]}=\bv E^{[3]},\qquad
 u_iv_i\ne0\quad \text{for }1\leq i\leq m-1.
\end{equation}
\end{lemma}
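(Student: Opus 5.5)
The plan is to realize $\br$ as an integral point of the cone $X_m$ attached to $L_0=\bgamma(E)\cdot\by$ and $C_0=\bgamma(E^{[3]})\cdot\by^{3}$, and then read off $\bu,\bv$ from row-space membership. Write $\bc'=\bgamma(E)$ and $\bd'=\bgamma(E^{[3]})$, both supported on $\{1,\ldots,s\}$ with nonzero entries there.

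\emph{Reduction to the cone.} Suppose first that $E$ and $E^{[3]}$ have rank $m-1$, as they do since their cofactor vectors are nonzero. Then the row space of $E$ over $\Q$ is the hyperplane $\{\by:\sum_j c'_jy_j=0\}$, and likewise for $E^{[3]}$ with $\bd'$. An integral row $\br$ with $L_0(\br)=0$ and $C_0(\br)=0$ therefore satisfies $\br=\bu E$ and $\br^{[3]}=\bv E^{[3]}$ with $\bu,\bv\in\Q^{m-1}$. To make them integral, I will count points $\br\in N\Z^m$ for a suitable $N$, where $N$ is the product of the elementary-divisor denominators of $E$ and $E^{[3]}$. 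Substituting $\br=N\by$ keeps the homogeneous equations unchanged, so by Cramer's rule $\bu,\bv$ become integral.

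\emph{Existence of many positive points.} By Lemma~\ref{lem:positiverow}, applied with $s\ge3$, some row $\boldsymbol e$ of $E$ is a positive nonsingular point of $X_m$. Since $E$ is an integer matrix, $\boldsymbol e$ is also the positive integral nonsingular point required in Lemma~\ref{lem:conecount}. Take a small positive box $\mathcal U$ around $\boldsymbol e$ inside the nonsingular locus. Lemma~\ref{lem:conecount} then gives $\gg P^{m-4}$ points $\by$ with $\by/P\in\mathcal U$. For $P$ large, all of these have coordinates exceeding $\max(K,1)/N$, so every coordinate of $\br=N\by$ exceeds both $K$ and $1$.

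\emph{Nonvanishing of the coefficients $u_i,v_i$: the main obstacle.} Each $u_i$ is a linear form $\ell_i(\br)$ and each $v_i$ is a linear form $\ell'_i(\br^{[3]})$, i.e.\ a cubic in $\br$, coming from a fixed left inverse on the row space. I must rule out that some $u_i$ or $v_i$ vanishes identically on $X_m$. By Lemma~\ref{lem:irreducible}, $X_m$ is absolutely irreducible; after eliminating one variable via $L_0=0$, it is a hypersurface $F=0$ to which Lemma~\ref{lem:realpatch} applies. I therefore need each $u_i,v_i$ to be nonzero modulo the ideal $(L_0,C_0)$.

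For $u_i$ this holds as soon as $\ell_i$ is not proportional to $L_0$. That is immediate, because $\ell_i$ applied to row $i$ of $E$ gives $1$, whereas $L_0$ vanishes there.

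For $v_i$ I first aim to show that the cubic $\ell'_i(\by^{3})$ is not in $(L_0,C_0)$, by testing it on points of $X_m$. Evaluated at row $i$ of $E$ itself, which lies on $X_m$, it equals $1\neq0$. This already shows that no $u_i$ or $v_i$ vanishes identically on $X_m$, so each is nonzero modulo the ideal. Lemma~\ref{lem:realpatch} then shows that the zero set of $\prod_i u_iv_i$ is a proper real-algebraic subset of the nonsingular patch $X_m\cap P\mathcal U$, of dimension at most $m-3$. Such a subset should contain only $O(P^{m-3+\varepsilon})$ integral points; a Bombieri--Pila or trivial fibration bound suffices.

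This lower-order count is the step I expect to need the most care. Because it is $o(P^{m-4})$ against the main term $\gg P^{m-4}$, some integral $\br$ with all $u_iv_i\neq0$ survives, which gives \eqref{eq:sharedrelations}.
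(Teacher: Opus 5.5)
Your proposal has a genuine gap in the last step. Lemma~\ref{lem:conecount} gives only $\asymp P^{m-4}$ integral points of $X_m$ in $P\mathcal U$, even though $X_m$ has dimension $m-2$: the cubic equation costs a factor $P^3$, not $P$.

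\begin{itemize}
\item \emph{Your bound is too weak.} The bound $O(P^{m-3+\varepsilon})$ for the zero set of $\prod_i u_iv_i$ is larger than the main term by a factor $P^{1+\varepsilon}$. It is not $o(P^{m-4})$.
\item \emph{The cited tools cannot close it.} Trivial fibration over a set of dimension $m-3$ gives at best $O(P^{m-3})$. Showing that $X_m\cap\{u_i=0\}$ (two linear equations and a cubic) has $o(P^{m-4})$ points would need a new counting argument, such as another circle-method estimate. The cubic conditions $v_i=0$ are harder still.
\item \emph{Your box forces the count.} Since $\bu(E_k)=\boldsymbol e_k$, you have $u_i(E_k)=0$ for $i\neq k$. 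So the product $\prod_i u_iv_i$ vanishes at the very centre of any box around a row of $E$.
\end{itemize}

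The missing idea is to use Lemma~\ref{lem:realpatch} to move the box, not to count an exceptional set. Put $G=\prod_i u_iv_i$. This $G$ is nonzero on $X_m$ because $\mathbb C[X_m]$ is a domain. The lemma then gives a positive nonsingular real point $\bx\in X_m$ with $G(\bx)\neq0$. Choose a small box $\mathcal U$ around $\bx$, with closure in $(0,\infty)^m$, on which $G$ has no zeros at all.

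Lemma~\ref{lem:conecount} still applies to this $\mathcal U$. Its integral-point hypothesis is met by the nonsingular row of $E$ from Lemma~\ref{lem:positiverow}, and that row need not lie in $\mathcal U$. Homogeneity then gives $G(\by)=P^{4(m-1)}G(\by/P)\neq0$ for every counted $\by$, so no exceptional set has to be estimated.

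The rest of your proposal matches the paper: reading off $\bu,\bv$ from the row spaces, clearing denominators by scaling $\br=N\by$, and checking that each $u_i,v_i$ is nonzero at the $i$-th row of $E$.
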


\begin{proof}

Write the signed cofactor vectors of $E$ and $E^{[3]}$ as
$(a_1,\ldots,a_s,0,\ldots,0)$ and
$(b_1,\ldots,b_s,0,\ldots,0)$, respectively.
Both matrices have rank $m-1$. Hence, for each $\by\in X_m$,
there are unique coefficient vectors $\bu(\by),\bv(\by)$ satisfying
\begin{equation}\label{eq:relationfunctions}
 \by=\bu(\by)E,\qquad \by^{[3]}=\bv(\by)E^{[3]}.
\end{equation}
Let $J=\{2,\ldots,m\}$, and let $E_J$ denote the submatrix
formed by these columns. The first cofactor of each matrix
is nonzero, so
\begin{equation}\label{eq:relationformulas}
 \bu(\by)=\by_J E_J^{-1},\qquad
 \bv(\by)=\by_J^{[3]}(E_J^{[3]})^{-1}.
\end{equation}
Thus the coordinates $u_i$ and $v_i$ are homogeneous
polynomials with rational coefficients, of degrees one
and three, respectively.

For the $i$-th row $E_i$ of $E$, we have
$\bu(E_i)=\bv(E_i)=\boldsymbol{e}_i$, so $u_i(E_i)=v_i(E_i)=1$.
Therefore none of the functions $u_i,v_i$ vanishes
identically on $X_m$. By Lemma~\ref{lem:irreducible}, $\mathbb C[X_m]$
is an integral domain, so
\[
G=\prod_{i=1}^{m-1}u_iv_i
\]
does not vanish identically on $X_m$ either.

By Lemma~\ref{lem:positiverow}, $X_m$ contains a positive nonsingular row
of $E$. Since $H=V(L_0)\cong\mathbb C^{m-1}$, we may
regard $X_m$ as the absolutely irreducible cubic
hypersurface in $H$ defined by $C_0|_H$.
Lemma~\ref{lem:realpatch} therefore gives a positive nonsingular real point $\bx\in X_m$ with $G(\bx)\ne0$.
Choose a bounded open box $\mathcal U$ containing $\bx$ with $\overline{\mathcal U}\subset(0,\infty)^m$, on which $G$ is nowhere zero.

By Lemma~\ref{lem:conecount}, for every sufficiently large $P$, there is an integral point $\by\in X_m$ with $\by/P\in\mathcal U$.
Homogeneity gives
\[
G(\by)=P^{4(m-1)}G(\by/P)\ne0,
\]
so every $u_i(\by)$ and $v_i(\by)$ is nonzero.
Set
\[
M=\left|\det E_J\,\det E_J^{[3]}\right|,
\qquad
\br=M\by,\qquad \bu=M\bu(\by),\qquad \bv=M^3\bv(\by).
\]
The adjugate formulas show that $u$ and $v$ are integral:
$M$ is divisible by $\det E_J$, and $M^3$ is divisible
by $\det E_J^{[3]}$. Furthermore,
\[
\br=\bu E,\qquad \br^{[3]}=\bv E^{[3]},
\qquad u_iv_i\ne0\quad \text{for }1\le i\le m-1.
\]
Taking $P$ sufficiently large ensures
$r_j>\max\{K,1\}$ for every $j$, as required.
\end{proof}

\section{A cofactor-preserving bordering construction}\label{sec:border}

The next lemma gives the cofactor identities used in the
construction. Vectors multiplying a matrix on the left
are row vectors.

\begin{lemma}\label{lem:borderidentity}
Let $E$ have size $(m-1)\times m$, and suppose
$\br=\bu E$ and $\br^{[3]}=\bv E^{[3]}$.
For a column $\bw=(w_1,\ldots,w_{m-1})^{\mathsf T}$ and a scalar $z$,
put
\begin{equation}\label{eq:borderblock}
 E^+=\begin{pmatrix}E&\bw\\\br&z\end{pmatrix},\qquad
 k_1=z-\bu\bw,\qquad k_3=z^3-\bv\bw^{[3]}.
\end{equation}
Then
\begin{equation}\label{eq:bordercofactor}
 \bgamma(E^+)=(k_1\bgamma(E),0),\qquad
 \bgamma((E^+)^{[3]})=(k_3\bgamma(E^{[3]}),0).
\end{equation}
\end{lemma}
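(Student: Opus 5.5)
The plan is a single elementary row operation followed by Laplace expansion along the last row. The argument for $E^+$ and for $(E^+)^{[3]}$ is the same. For the cube, note that $(E^+)^{[3]}$ has blocks $E^{[3]}$, $\bw^{[3]}$, $\br^{[3]}$ and $z^3$, and that $\br^{[3]}=\bv E^{[3]}$. Hence it suffices to prove the purely linear-algebraic statement: if $F$ is $(m-1)\times m$, $\br=\bu F$, and $F^+=\begin{pmatrix}F&\bw\\\br&z\end{pmatrix}$, then $\bgamma(F^+)=((z-\bu\bw)\bgamma(F),0)$.

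The first step is the row operation. Subtract $\sum_i u_i\cdot(\text{row } i)$ from the last row of $F^+$. This replaces the last row $(\br,z)$ by $(0,\dots,0,z-\bu\bw)=(0,\dots,0,k)$. The operation multiplies $F^+$ on the left by a unimodular lower-triangular matrix $U$. Deleting column $j$ commutes with left multiplication, since $(UF^+)_{\widehat j}=U(F^+)_{\widehat j}$. Therefore every maximal minor $\det (F^+)_{\widehat j}$ is unchanged, and so is the signed cofactor vector.

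The second step is to compute the minors of the reduced matrix $G=\begin{pmatrix}F&\bw\\ 0&k\end{pmatrix}$, which has size $m\times(m+1)$.

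For $j\le m$, $G_{\widehat j}$ is a square matrix whose last row is $(0,\dots,0,k)$. Expanding along that row gives $\det G_{\widehat j}=k\det F_{\widehat j}$. The sign $(-1)^{m+m}=1$ does not matter here, since $k$ sits in the bottom-right corner.

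For $j=m+1$, $G_{\widehat{m+1}}=\begin{pmatrix}F\\0\end{pmatrix}$ has a zero row, so its determinant is $0$.

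The sign conventions then give $\gamma_j(F^+)=(-1)^{1+j}k\det F_{\widehat j}=k\gamma_j(F)$ for $j\le m$ and $\gamma_{m+1}(F^+)=0$.

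Applying this with $F=E$ gives the first identity of \eqref{eq:bordercofactor} with $k=k_1$. Applying it with $F=E^{[3]}$, $\bu$ replaced by $\bv$, $\bw$ by $\bw^{[3]}$, and $z$ by $z^3$ gives the second identity with $k=k_3$.

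No step is a genuine obstacle. The only point needing care is confirming that the row operation leaves each maximal minor exactly unchanged, including sign, and that the cofactor signs of $F^+$ and $F$ match for $j\le m$. Both follow from the observations above.
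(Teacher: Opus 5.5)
Your proposal is correct and matches the paper's proof: subtract the $\bu$-combination (respectively the $\bv$-combination) of the first $m-1$ rows from the last row, then expand along the resulting row $(0,\ldots,0,k)$. The only cosmetic difference is in bookkeeping. You perform the row operation once on the whole bordered matrix and note that it commutes with column deletion. The paper instead applies it to each column-deleted minor separately, and gets the last cofactor from the dependent rows of $\begin{pmatrix}E\\ \br\end{pmatrix}$ rather than from a zero row.
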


\begin{proof}
For \(j\le m\), delete column \(j\) from \(E^+\).
Subtract the linear combination of the first \(m-1\)
rows with coefficients \({\bu}\) from the last row.
Since \({\br}_{\widehat j}={\bu}E_{\widehat j}\),
the last row becomes \((0,\ldots,0,k_1)\).
Expansion along this row gives determinant
\(k_1\det E_{\widehat j}\), with positive expansion sign.
The signed-cofactor factor \((-1)^{1+j}\) is the same
for \(E\) and \(E^+\).

Deleting the last column gives
\(\begin{pmatrix}E\\{\br}\end{pmatrix}\),
whose rows are dependent, so the last cofactor is zero.
The same argument applied to \((E^+)^{[3]}\), using
$\br^{[3]}=\bv E^{[3]}$, gives the second identity.
\end{proof}

\begin{remark}
In general, $(\bu E)^{[3]}\ne \bu E^{[3]}$, so the relation \({\br}={\bu}E\) alone does not
give the second identity in \eqref{eq:bordercofactor}. The cubed cofactors require the separate relation
\({\br}^{[3]}={\bv}E^{[3]}\).
\end{remark}

\begin{lemma}\label{lem:auxiliary}
Let \(E\) be a positive real \((m-1)\times m\) matrix
of rank \(m-1\), and let \({\br}\) be positive.
Suppose \eqref{eq:sharedrelations} holds with integral $\bu,\bv$ whose
coordinates are all nonzero. If $m\geq10$, then, for every real $K$,
there are integers $w_1,\ldots,w_{m-1},z$, all greater than $K$ and one,
such that
\begin{equation}\label{eq:unitcomplements}
 z-\sum_{i=1}^{m-1}u_iw_i=1,\qquad
 z^3-\sum_{i=1}^{m-1}v_iw_i^3=1.
\end{equation}
\end{lemma}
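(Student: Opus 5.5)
Our plan is to apply Theorem~\ref{thm:analytic} with $s=m\geq10$ variables $(w_1,\ldots,w_{m-1},z)$. The linear form is $L(\bw,z)=z-\sum u_iw_i$ and the cubic form is $C(\bw,z)=z^3-\sum v_iw_i^3$, with targets $u=v=1$. All coefficients ($1$ and $-u_i$ in $L$; $1$ and $-v_i$ in $C$) are nonzero integers by hypothesis, so the diagonal structure required by the theorem is present. We choose the box so that all coordinates are $\gg P$; then for $P$ large every solution counted has all entries exceeding $\max\{K,1\}$. It remains to verify the real condition (2) and the local condition (1).

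\textbf{Real condition.} We need a nonsingular real zero of $L=C=0$ with all coordinates positive, and we get it from the row $\br$. Since $\br$ and $\br^{[3]}$ are positive, while $\bu$ and $\bv$ are only known to be nonzero, we instead look for positive $(\bw,z)$ directly. Take $z=\sum u_iw_i$ and require $(\sum u_iw_i)^3=\sum v_iw_i^3$ with $\bw>0$ and $\sum u_iw_i>0$. A natural choice is $\bw=E\boldsymbol{t}$ for a positive column $\boldsymbol{t}$, since then $\bu\bw=\br\boldsymbol{t}>0$. The cubic condition, however, does not transform as nicely.

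An alternative is a sign and intermediate-value argument along a curve in the positive orthant. Set $\phi(\bw)=(\bu\bw)^3-\bv\bw^{[3]}$. We look for positive points $\bw^{+}$ and $\bw^{-}$ with $\bu\bw>0$ at both, and with $\phi(\bw^{+})>0$ and $\phi(\bw^{-})<0$. Connecting them inside the convex cone $\{\bw>0,\ \bu\bw>0\}$ then yields a zero. Scaling a single coordinate $w_i$ large gives $\phi\sim(u_i^3-v_i)w_i^3$, which fixes the sign whenever $u_i^3\neq v_i$.

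If the signs do not vary this way, we use the rows of $E$ instead. For each row $E_k$, consider the bordered determinants in Lemma~\ref{lem:borderidentity}. Combining the relations $\br=\bu E$ and $\br^{[3]}=\bv E^{[3]}$ with positivity of $E$, we expect to exhibit both signs. Once a real zero in the open cone is found, we need the Jacobian to have rank two there. The gradient of $L$ has last entry $1$; the gradient of $C$ has last entry $3z^2$ and entries $-3v_iw_i^2$. Dependence would force $v_iw_i^2=u_iz^2$ for all $i$. This is a codimension $\ge2$ condition, so it can be avoided by perturbing along the zero set, which is a smooth hypersurface in the positive cone of dimension $m-2\ge8$.

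\textbf{Local condition.} For each prime $p$ we need a $\Z_p$-solution of $L=C=1$ with Jacobian rank two over $\Q_p$. Here the coefficient of $z$ is $1$ in both equations, and this helps: take all $w_i=0$ and $z=1$. This is an exact integer solution. The Jacobian rows are $(-\bu,1)$ and $(0,\ldots,0,3)$, and the minor using column $z$ and a column $i$ equals $3u_i\neq0$. So it is nonzero in every $\Q_p$, and condition (1) holds for all $p$ simultaneously. Lemma~\ref{lem:densitypositive} then makes every $\sigma_p$ positive.

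With both conditions in hand, Theorem~\ref{thm:analytic} gives $\asymp P^{m-4}$ solutions with $(\bw,z)/P$ in the chosen box, and hence integers with all entries greater than $\max\{K,1\}$. \textbf{The main obstacle} is the real condition: producing a positive nonsingular real zero of $z=\bu\bw$, $z^3=\bv\bw^{[3]}$ when the signs of the $u_i,v_i$ are uncontrolled. The sign-change argument above is our first attempt. If it fails, we plan to exploit the freedom to choose $\br$ in Lemma~\ref{lem:sharedrow}, for instance arranging that the $u_i$ and $v_i$ take both signs.
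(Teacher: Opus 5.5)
\textbf{Verdict: there is a genuine gap.} Your overall plan and your local condition agree with the paper. The point $(\bw,z)=(0,\ldots,0,1)$ is an exact integral solution of \eqref{eq:unitcomplements}, and the minor in columns $w_i,z$ is $\pm3u_i\neq0$ in every $\Q_p$. The gap is the real condition (2) of Theorem~\ref{thm:analytic}, which you leave open.
\begin{itemize}
\item The sign-change argument is never carried out. Scaling one coordinate only detects the sign of $u_i^3-v_i$ for indices with $u_i>0$, and nothing in the hypotheses controls these signs.
\item Using the rows $E_k$ does not make sense dimensionally: rows of $E$ have length $m$, while $\bw$ has length $m-1$.
\item Your fallback of re-choosing $\br$ is not available, because $E,\br,\bu,\bv$ are fixed data in this lemma.
\item The perturbation argument for nonsingularity is only heuristic. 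A real zero produced by continuity may be singular, and the real zero set need not be a smooth hypersurface near it.
\item You never use the hypothesis $\operatorname{rank}E=m-1$, which is exactly what is needed at this step.
\end{itemize}

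The missing idea is one step from your choice $\bw=E\boldsymbol{t}$: drop the requirement that $\boldsymbol{t}$ be positive and take $\boldsymbol{t}=\boldsymbol{e}_j$. Then $w_i=E_{ij}>0$. With $z=r_j>0$, the identities $\br=\bu E$ and $\br^{[3]}=\bv E^{[3]}$, read in column $j$, say precisely $z=\sum_i u_iw_i$ and $z^3=\sum_i v_iw_i^3$, because $(E\boldsymbol{e}_j)^{[3]}=E^{[3]}\boldsymbol{e}_j$. So each of the $m$ columns of $E$ gives a positive real zero of the homogeneous system, with positivity coming from $E$ rather than from $\boldsymbol{t}$.

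Nonsingularity then follows from a rank argument, not a perturbation. The minor in columns $w_i,z$ is $3(v_iw_i^2-u_iz^2)$. If every column point were singular, you would have $v_iE_{ij}^2=u_ir_j^2$ for all $i,j$. This forces $u_i/v_i>0$ and $E_{ij}=\sqrt{u_i/v_i}\,r_j$, so every row of $E$ is a multiple of $\br$. That gives $\operatorname{rank}E\le1<m-1$, a contradiction. A small positive box around a nonsingular column point then feeds into Theorem~\ref{thm:analytic} exactly as you intended.
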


\begin{proof}
The system consists of a linear equation and a diagonal
cubic in \(m\) variables, with coefficient vectors
\begin{equation}\label{eq:auxcoeff}
 (-u_1,\ldots,-u_{m-1},1),\qquad
 (-v_1,\ldots,-v_{m-1},1).
\end{equation}
All coefficients are nonzero. We check the local and
real conditions of Theorem~\ref{thm:analytic}.

For every prime $p$, the point
\begin{equation}\label{eq:trivialaffinepoint}
 (w_1,\ldots,w_{m-1},z)=(0,\ldots,0,1)
\end{equation}
is an integral solution of \eqref{eq:unitcomplements}.
The Jacobian minor in columns $w_i,z$ equals $-3u_i$ at this point. Since \(u_i\ne0\), this minor
is nonzero over every \(\mathbb Q_p\), giving the required
nonsingular \(\mathbb Z_p\)-solution.

For the homogeneous real equations, take any column \(j\)
of \(E\) and set \(w_i=E_{ij}\) and \(z=r_j\).
This point is positive, and the two equations follow
from \({\br}={\bu}E\) and
\({\br}^{[3]}={\bv}E^{[3]}\).

At least one of these points is nonsingular.
Otherwise, the Jacobian minor
\[
\det\begin{pmatrix}
-u_i&1\\
-3v_iw_i^2&3z^2
\end{pmatrix}
=3(v_iw_i^2-u_iz^2)
\]
would vanish at every column point, giving
\begin{equation}\label{eq:columnsingularity}
 v_iE_{ij}^{\,2}=u_i r_j^2
 \qquad\text{for }1\leq i\leq m-1,\ 1\leq j\leq m.
\end{equation}
Since \(E_{ij},r_j>0\), this implies \(u_i/v_i>0\) and
\[
E_{ij}=\sqrt{\frac{u_i}{v_i}}\,r_j
\qquad(1\le j\le m).
\]
For each \(i\), the multiplier is independent of \(j\).
Thus every row of \(E\) is proportional to \({\br}\),
so \(\operatorname{rank}E\le1\), contradicting
\(\operatorname{rank}E=m-1\ge2\).

Choose a sufficiently small positive product box
$\mathcal B$ around a nonsingular column point.
By Theorem~\ref{thm:analytic}, applied with
$s=m$ and $u=v=1$, we have
\[
R(P)=\kappa P^{m-4}
+O(P^{m-4-1/100}),
\qquad \kappa>0.
\]
Thus $R(P)>0$ for all sufficiently large $P$.
Since every coordinate interval of $\mathcal B$
is bounded away from zero, the resulting
integer solutions have all coordinates
greater than $K$ and $1$ for sufficiently large $P$.
\end{proof}

\begin{theorem}[One-step extension]\label{thm:border}
Let $m\geq10$ and let $E$ be an integer $(m-1)\times m$ matrix whose
entries are at least two. Suppose that its signed cofactor vector and
that of $E^{[3]}$ have the same support, of cardinality $s\geq10$.
For every real $K$, there is an integer $m\times(m+1)$ matrix $E^+$,
with $E$ as its upper-left block, such that every new entry is greater
than $K$ and one, and
\begin{equation}\label{eq:unitborder}
 \bgamma(E^+)=(\bgamma(E),0),\qquad
 \bgamma((E^+)^{[3]})=(\bgamma(E^{[3]}),0).
\end{equation}
\end{theorem}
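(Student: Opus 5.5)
The plan is to chain Lemmas~\ref{lem:sharedrow}, \ref{lem:auxiliary} and \ref{lem:borderidentity}. After permuting columns, which only permutes and possibly changes signs of cofactors consistently for $E$ and $E^{[3]}$, I will assume the common support is $\{1,\ldots,s\}$. Permuting back at the end is harmless, because the new column is appended last. I would rather avoid the permutation and simply note that Lemma~\ref{lem:sharedrow} only uses that the support has size $s\ge10$. The key ingredients are Lemma~\ref{lem:irreducible}, the existence of some nonzero cofactor to define $E_J$, and Lemma~\ref{lem:positiverow}, and all of these are insensitive to which $s$ columns carry the support.

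First I check that $E$ has rank $m-1$ and that $E^{[3]}$ does too. Both hold because their cofactor vectors are nonzero, given $s\ge10>0$. Then Lemma~\ref{lem:sharedrow}, applied with this $K$, gives an integral row $\br$ with all $r_j>\max\{K,1\}$ and integral rows $\bu,\bv$ with all coordinates nonzero. These satisfy $\br=\bu E$ and $\br^{[3]}=\bv E^{[3]}$. Since $E$ is positive and $\br$ is positive, the hypotheses of Lemma~\ref{lem:auxiliary} hold with $m\ge10$. That lemma supplies integers $w_1,\ldots,w_{m-1},z$, all greater than $K$ and one, such that $k_1=z-\bu\bw=1$ and $k_3=z^3-\bv\bw^{[3]}=1$.

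Next I form $E^+=\begin{pmatrix}E&\bw\\ \br&z\end{pmatrix}$. It is an integer $m\times(m+1)$ matrix with $E$ as its upper-left block. Its new entries, namely the $r_j$, the $w_i$ and $z$, all exceed $K$ and one. Lemma~\ref{lem:borderidentity} then gives $\bgamma(E^+)=(k_1\bgamma(E),0)=(\bgamma(E),0)$, and similarly $\bgamma((E^+)^{[3]})=(\bgamma(E^{[3]}),0)$, which is \eqref{eq:unitborder}.

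The only delicate point is matching the hypotheses of Lemma~\ref{lem:sharedrow}, which is stated with support exactly $\{1,\ldots,s\}$. I would handle this by the column relabeling described above. I would also check that Lemma~\ref{lem:positiverow} and the nonvanishing of the first cofactor, which Lemma~\ref{lem:sharedrow} uses to define $E_J$, transfer under relabeling. The signed cofactors change by a global sign, and the same permutation acts on both $E$ and $E^{[3]}$, so the supports still coincide. Undoing the permutation on the first $m$ columns of $E^+$ preserves the zero last cofactor and restores $\bgamma(E)$ and $\bgamma(E^{[3]})$ up to the same sign. That sign cancels because the identical permutation is applied before and after.
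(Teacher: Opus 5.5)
Your proposal is correct and is essentially the paper's proof. Like the paper, you permute columns so the common support is $\{1,\ldots,s\}$, take $\br,\bu,\bv$ from Lemma~\ref{lem:sharedrow}, note that $\operatorname{rank}E=m-1$, obtain $\bw,z$ with $k_1=k_3=1$ from Lemma~\ref{lem:auxiliary}, and conclude with Lemma~\ref{lem:borderidentity}. The only difference is cosmetic: the paper restores the original column order right after Lemma~\ref{lem:sharedrow}, since $\br=\bu E$ and $\br^{[3]}=\bv E^{[3]}$ survive with the same $\bu,\bv$, whereas you undo the permutation after forming $E^+$. Both work, and your sign-cancellation check justifies your version.
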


\begin{proof}
Temporarily permute the columns of \(E\) so that the
common cofactor support is \(\{1,\ldots,s\}\).
Column permutations permute the signed cofactor vectors
up to a common sign, so the hypotheses of Lemma~\ref{lem:sharedrow}
still hold. Apply that lemma to obtain
\({\br},{\bu},{\bv}\) satisfying \eqref{eq:sharedrelations}, with every
entry of \({\br}\) greater than \(K\) and one.
Restore the original column order in both \(E\) and
\({\br}\); the two row relations remain valid with
the same \({\bu},{\bv}\).

The nonzero cofactors imply \(\operatorname{rank}E=m-1\).
Lemma~\ref{lem:auxiliary} therefore gives integral \({\bw},z\) with the required entry bounds and \(k_1=k_3=1\), form \(E^+\) as in \eqref{eq:borderblock}. Lemma~\ref{lem:borderidentity} gives \eqref{eq:unitborder}.
\end{proof}

\begin{remark}\label{rem:notduplicate}
Taking $\br=tE_i$ for a single old row does not give this construction. In that case, $\bu=t\boldsymbol e_i$ and $\bv=t^3\boldsymbol e_i$, so the two equations
with \(k_1=k_3=1\) become
\[
 z-tw_i=1,\qquad z^3-(tw_i)^3=1.
\]
Substituting \(z=tw_i+1\) into the second equation gives
\(3(tw_i)(tw_i+1)=0\), which is impossible for \(t,w_i>0\).
The simultaneous relations with all coefficients nonzero, rather than a duplicated row, are essential to the construction.
\end{remark}

\section{Proofs of the main results and consequences}\label{sec:finish}

\begin{proof}[Proof of Theorem~\ref{thm:seed}]
Apply Theorem~\ref{thm:analytic} with the coefficients in
\eqref{eq:cofdef}, the box \eqref{eq:box}, \(s=10\) and \(u=v=1\). All coefficients are nonzero by the table in Section~\ref{sec:cofactor}. 
The vector $\br$ defined in \eqref{eq:box}
lies in $\cB$, satisfies $L(\br)=C(\br)=0$
by \eqref{eq:realzero}, and has Jacobian
rank two by \eqref{eq:realminor}.
Proposition~\ref{prop:alllocal} verifies integral nonsingular local solubility at every prime. Thus
\[
\kappa=\fS(1,1)\fJ(\cB)>0.
\]
Equation \eqref{eq:LC} identifies the counted solutions
with the determinant conditions, giving \eqref{eq:maincount}.

Every entry of \(B\) is at least two, and \(\bx/P\in\cB\)
implies \(x_j>1.9P\) for every \(j\). For all sufficiently
large \(P\), the asymptotic count is positive and all
first-row entries exceed one. Choosing solutions along
a sequence \(P\to\infty\) makes every first-row entry
tend to infinity, giving infinitely many distinct matrices.
\end{proof}

\begin{proof}[Proof of Theorem~\ref{thm:frames}]
Set $B_{10}=B$. Its entries are at least two, and all coordinates of \(\bc\) and \(\bd\) are nonzero by Section~\ref{sec:cofactor}. Suppose \(B_m\) has been constructed with the required properties. Apply Theorem~\ref{thm:border} to \(E=B_m\) with \(K=1\), and set \(B_{m+1}=E^+\). This retains \(B_m\) as the upper-left block and appends a zero to each cofactor vector. The new entries are integers greater than one, and the common cofactor support still has size ten. Induction gives \eqref{eq:stablecofactors} and the
nesting property for every \(n\ge10\).

Laplace expansion along the first row gives
\[
 \det\begin{pmatrix}\by\\B_n\end{pmatrix}
   =\sum_{j=1}^n\gamma_j(B_n)y_j
   =\sum_{j=1}^{10}c_jy_j,
\]
The same expansion for the entrywise cube gives
$\sum_{j=1}^{10}d_jy_j^3$, which proves
\eqref{eq:stableequations}.
\end{proof}


\begin{proof}[Proof of Theorem~\ref{thm:main}]
Choose a first row \(\bx\) supplied by
Theorem~\ref{thm:seed}, with every entry at least two. For each \(n\ge10\), form
\begin{equation}\label{eq:nestedA}
 A_n=\begin{pmatrix}(\bx,2,\ldots,2)\\ B_n\end{pmatrix},
\end{equation}
where the lower block comprises all $n-1$ rows of $B_n$.
Its two determinants equal one by \eqref{eq:stableequations}. Since \(B_n\)
is the upper-left block of \(B_{n+1}\) and the first
\(n\) entries of the top row are unchanged, \(A_n\)
is the upper-left \(n\times n\) block of \(A_{n+1}\).
These compatible matrices define the required infinite array.

Theorem~\ref{thm:seed} supplies infinitely many distinct
choices of \(\bx\) with entries at least two. Distinct
choices remain distinct in \(A_n\), so there are
infinitely many matrices at every fixed order \(n\ge10\).
\end{proof}

\begin{proof}[Proof of Corollary~\ref{cor:allcount}]
The determinant equations in \eqref{eq:stableequations}
involve only the first ten variables. Each remaining
coordinate is any integer in \((2P,3P)\), so the count
is exactly the order-ten count multiplied by
\[
\bigl(\#\{t\in\Z:2P<t<3P\}\bigr)^{n-10}
=P^{n-10}\bigl(1+O_n(P^{-1})\bigr).
\]
For \(n=10\), this factor is one. Multiplying by
\eqref{eq:maincount} gives \eqref{eq:allcount}, with
the same leading constant \(\kappa\); the additional
error is \(O_n(P^{n-5})\).
\end{proof}

\subsection{Large lower bounds in every sufficiently large order}

\begin{lemma}\label{lem:extendmatrix}
Let $A$ be an integer matrix of order $m\geq10$ whose entries are
at least two, with $\det A=\det A^{[3]}=1$. Suppose the signed cofactors along its first
row and those along the first row of $A^{[3]}$ have common support of
size at least ten. Then $A$ is the upper-left block of a matrix of order
$m+1$ with the same two determinants and with every new entry exceeding
any prescribed bound. The same cofactor-support property holds after
extension.
\end{lemma}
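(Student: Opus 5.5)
We split $A$ into its first row and the block below it, border the block with Theorem~\ref{thm:border}, and then choose the one new first-row entry freely. Write
\[
A=\begin{pmatrix}\bx\\ E\end{pmatrix},
\]
where $E$ is the $(m-1)\times m$ block of the last $m-1$ rows. By Laplace expansion along the first row, the signed first-row cofactors of $A$ and of $A^{[3]}$ are exactly $\bgamma(E)$ and $\bgamma(E^{[3]})$. This gives
\[
\det A=\bx\cdot\bgamma(E)=1,\qquad \det A^{[3]}=\bx^{[3]}\cdot\bgamma(E^{[3]})=1.
\]
The hypothesis therefore says that $\bgamma(E)$ and $\bgamma(E^{[3]})$ have a common support of size $s\ge10$. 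Every entry of $E$ is at least two, and the nonzero cofactors force $\operatorname{rank}E=m-1$.

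One point needs care. The statement only asks the two supports to share at least ten indices, while Theorem~\ref{thm:border} asks for equal supports of cardinality at least ten. I will read ``common support'' as meaning the two supports coincide, which is how it is used in Theorem~\ref{thm:frames}. If only an intersection is intended, I would check whether the proof of Lemma~\ref{lem:sharedrow} goes through with $s$ replaced by the intersection size. I expect this because Lemma~\ref{lem:conecount} and the irreducibility of $X_m$ only need the active coordinates to be nonzero in both forms. Fixing this reading of the hypothesis is the main obstacle; the rest is bookkeeping.

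Apply Theorem~\ref{thm:border} to $E$ with the prescribed bound $K$. It produces an $m\times(m+1)$ integer matrix $E^+$ with $E$ as its upper-left block, all new entries greater than $\max\{K,1\}$, and
\[
\bgamma(E^+)=(\bgamma(E),0),\qquad \bgamma((E^+)^{[3]})=(\bgamma(E^{[3]}),0).
\]
Now set
\[
A^+=\begin{pmatrix}(\bx,t)\\ E^+\end{pmatrix}
\]
for any integer $t>\max\{K,1\}$. Since $\bx$ and $E$ sit in the upper-left corner unchanged, $A$ is the upper-left $m\times m$ block of $A^+$. Every new entry, namely $t$ together with the new column and row of $E^+$, exceeds the bound.

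Expanding along the first row again, the coefficient of $t$ is the last cofactor, which is $0$ in both cases. Hence
\[
\det A^+=\bx\cdot\bgamma(E)=1,\qquad \det (A^+)^{[3]}=\bx^{[3]}\cdot\bgamma(E^{[3]})=1.
\]
The first-row cofactor vectors of $A^+$ and $(A^+)^{[3]}$ are $(\bgamma(E),0)$ and $(\bgamma(E^{[3]}),0)$. Their supports are the old supports, so the common support still has size at least ten, and the same cofactor-support property holds after extension.
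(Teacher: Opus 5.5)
Your proposal is correct and follows essentially the same route as the paper: take $E$ to be the lower $m-1$ rows of $A$, apply Theorem~\ref{thm:border} to get $E^+$, append one large integer to the first row, and recover both determinants by Laplace expansion, since the new last cofactors vanish. Your reading of ``common support'' as equal supports is also the one the paper uses implicitly: its proof invokes Theorem~\ref{thm:border} directly, and its only application (Corollary~\ref{cor:alllarge}) has full supports for $\bc^{\otimes r}$ and $\bd^{\otimes r}$.
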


\begin{proof}
Let \(E\) be the lower \(m-1\) rows of \(A\), and let
\(K\) be the prescribed bound. Its two cofactor vectors
are precisely the first-row cofactor vectors of \(A\)
and \(A^{[3]}\). Apply Theorem~\ref{thm:border} to obtain
\(E^+\), with all new entries greater than \(K\) and one.
Append an integer \(q>\max\{K,1\}\) to the first row
of \(A\), and use \(E^+\) as the lower block of the
enlarged square matrix.

By \eqref{eq:unitborder}, the first-row cofactor vectors
are obtained by appending zero to the old vectors,
both before and after entrywise cubing. Laplace expansion
therefore gives the original determinants, both equal
to one. The common cofactor support is unchanged.
\end{proof}

\begin{proof}[Proof of Corollary~\ref{cor:alllarge}]
Choose an order-ten matrix \(A\) from
Theorem~\ref{thm:seed} with every entry at least two.
Its first-row cofactor vectors are \(\bc\) and \(\bd\),
with every coordinate nonzero. For a fixed integer
\(r\ge1\), let \(T=A^{\otimes r}\), of order \(N=10^r\).
The identities
\[
(A\otimes H)^{[3]}=A^{[3]}\otimes H^{[3]},
\qquad
\det(A\otimes H)
=(\det A)^{\operatorname{ord}H}
 (\det H)^{\operatorname{ord}A}
\]
show that \(\det T=\det T^{[3]}=1\). Each entry of \(T\)
is a product of \(r\) entries of \(A\), so it is at
least \(2^r\).

For a determinant-one matrix, the transpose of its
first-row cofactor vector is the first column of its
inverse. Since \(T^{-1}=(A^{-1})^{\otimes r}\), its
first column is \((\bc^\top)^{\otimes r}\). Applying
the same argument to \(T^{[3]}=(A^{[3]})^{\otimes r}\)
shows that the first-row cofactor vectors of \(T\)
and \(T^{[3]}\) are \(\bc^{\otimes r}\) and
\(\bd^{\otimes r}\), respectively. Every coordinate
of both vectors is nonzero, so
Lemma~\ref{lem:extendmatrix} applies.

Starting from \(T\), apply this lemma repeatedly with
prescribed bound \(2^r-1\). Every new entry is then an
integer at least \(2^r\), and the existing entries are
unchanged. This gives the required matrices in every
order \(n\ge10^r\). For the final assertion, choose
\(r\ge1\) with \(2^r>K\).
\end{proof}

\subsection{Nonexistence in orders one and two}

\begin{proposition}\label{prop:small}
There is no integer matrix of order one or two with both determinants
equal to one and every entry of absolute value greater than one.
\end{proposition}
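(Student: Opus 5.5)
Order one is immediate: a $1\times1$ matrix $(a)$ has $\det A=a$, so $\det A=1$ forces $a=1$, contradicting $|a|>1$. The work is in order two, where I would write
\[
A=\begin{pmatrix}a&b\\ c&d\end{pmatrix},\qquad ad-bc=1,\qquad a^3d^3-b^3c^3=1,
\]
with $|a|,|b|,|c|,|d|\ge2$, and reduce both conditions to a relation between the two integers $X=ad$ and $Y=bc$.

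Setting $X=ad$ and $Y=bc$, the hypotheses become $X-Y=1$ and $X^3-Y^3=1$. Factor the second as
\[
X^3-Y^3=(X-Y)(X^2+XY+Y^2),
\]
so the first equation gives $X^2+XY+Y^2=1$. Substituting $X=Y+1$ yields $3Y^2+3Y+1=1$, that is, $3Y(Y+1)=0$. Hence $Y=0$ or $Y=-1$. This is the same computation as in Remark~\ref{rem:notduplicate}.

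If $Y=bc=0$, some entry is zero. If $Y=-1$, then $bc=-1$ forces $|b|=|c|=1$. Either case contradicts every entry having absolute value greater than one, which completes order two.

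There is no real obstacle here. The only point to check is that the argument uses nothing about the sign of the entries, so it covers the stated hypothesis $|a_{ij}|>1$ and not merely $a_{ij}\ge2$. It also shows more: the case $Y=-1$ gives $X=ad=0$, so in fact $X=0$ or $Y=0$, meaning a zero entry always occurs.
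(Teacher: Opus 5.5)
Your proof is correct and follows the paper's argument essentially verbatim: set $U=a_{11}a_{22}$, $V=a_{12}a_{21}$ with $U-V=U^3-V^3=1$, factor to obtain $3V(V+1)=0$, and rule out $(U,V)\in\{(1,0),(0,-1)\}$. Your closing observation that one of $U,V$ must vanish, so some entry is zero, states the same conclusion slightly more sharply than the paper's appeal to $|U|,|V|\ge4$.
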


\begin{proof}
In order one, the determinant condition forces the only
entry to equal one. For order two, write \(U=a_{11}a_{22}\)
and \(V=a_{12}a_{21}\). The determinant equations give
\[
U-V=1,\qquad U^3-V^3=1.
\]
Factoring the second equation and using the first gives
\(U^2+UV+V^2=1\). Substituting \(U=V+1\) yields
\(3V(V+1)=0\), so \((U,V)=(1,0)\) or \((0,-1)\).
Neither is possible when \(|a_{ij}|>1\), which would force $|U|,|V|\geq4$.
\end{proof}

\section{Further remarks}\label{sec:scope}

Theorem~\ref{thm:main} shows there exists infinitely many positive solutions in \emph{every} order $n\ge10$, while Proposition \eqref{prop:small} rules out orders one and two. Our construction leaves open the existence of positive solutions in orders $3$ through $9$. Another natural question is whether, for a fixed order $n\ge10$, the minimum entry can be arbitrarily large. Corollary \ref{cor:alllarge} establishes that for any  integer $r\ge1$, there exist matrices of every order $n\ge10^r$ whose entries are all at least $2^r$.

Our proofs also yield a theoretically terminating exhaustive search procedure. All searches are performed by enumerating integer
vectors in increasing order of their maximum
absolute coordinate. To find a valid first row for the order-10 case, one may enumerate positive integer rows (with entries taken at least two for this search scheme) and check the two determinant equations; Theorem \ref{thm:seed} ensures that this search must terminate.
At each extension step, we enumerate integral rows with entries at least two satisfying the homogeneous cofactor equations until all relation coefficients in \eqref{eq:relationformulas} are nonzero, then we  clear the denominators of the corresponding rational row-relation coefficients as in Lemma~\ref{lem:sharedrow}. After that, we enumerate the integer pairs $(\boldsymbol{w},z)$ that meet the required entry bounds and verify the equations in \eqref{eq:unitcomplements}. Lemmas \ref{lem:sharedrow} and \ref{lem:auxiliary} guarantee termination of these searches, although we do not provide any bound on the running time.

The seed block, its cofactor vectors, and the small-prime certificates are all explicit. The accompanying program verifies these finite arithmetic data and tests the cofactor identity underlying the bordering construction. Details of the exact-arithmetic checks are given in Appendix A.

\appendix
\section{Verification of the finite arithmetic certificates}\label{app:verify}

Our paper has an accompanying file \texttt{verify\_all\_orders.py}, which is a self-contained Python 3.9+ program using
only the standard library. It can be run with
\begin{lstlisting}
python verify_all_orders.py
\end{lstlisting}
This program contains the seed block \eqref{eq:B}, both cofactor vectors and the five local certificates from Section 4.2. All calculations use arbitrary-precision integers and
exact rational numbers.

The twenty signed cofactors of \(B\) and \(B^{[3]}\) are computed
from their \(9\times9\) minors. Each determinant is evaluated by
both fraction-free Bareiss elimination and rational Gaussian
elimination with row pivoting. The results are compared before
the cofactor sign \((-1)^{1+j}\) is applied. At a pivot \(a_{kk}\),
the Bareiss update is
\[
a_{ij}\leftarrow
\frac{a_{kk}a_{ij}-a_{ik}a_{kj}}
     {\text{previous pivot}}
\qquad \text{for }i,j>k,
\]
with the previous pivot initially set to one.

Using these cofactors,  the program verifies the two primitive-vector identities and the Bézout identity~ \eqref{eq:bezout}. It recomputes and prints all $45$ pairwise gcds, checks that each divides \(840\), and verifies that their least common multiple is \(840\). It also verifies \(L(\mathbf r)=C(\mathbf r)=0\) and the exact value in~\eqref{eq:realminor}. The repeated-row identities are checked for every row of \(B\).

For each small-prime certificate, the program evaluates
\(L(\mathbf x)\bmod m\), \(C(\mathbf x)\bmod m\), and
\(D_{ij}(\mathbf x)\bmod p\). In the \(3\)-adic case,
eliminating \(x_3\) gives an exact rational residual and its
derivative; the program verifies that the residual is divisible
by \(27\) and that the derivative has \(3\)-adic valuation one.
The estimates used for the large primes are checked through
the exact inequalities
\[
13^5>600^2,\qquad
512\cdot1000<13^6,\qquad
\frac{64}{75}+\frac{1}{1000}<1.
\]

A successful default run will include the message
\begin{lstlisting}
ALL FINITE CERTIFICATES AND ALGEBRAIC REGRESSION TESTS PASSED.
\end{lstlisting}
The default run checks the seed certificates and the algebraic bordering identities. 

\subsection{The bordering certificate checker}\hfill

The function \path{check_border_certificate(E,r,u,v,w,z)} checks both row identities in \eqref{eq:sharedrelations}, the two
integer complements, and both cofactor vectors in \eqref{eq:bordercofactor}.
All cofactors are computed directly using the two determinant
algorithms above. In strict mode, it requires every
entry be at least two, all relation coefficients be
nonzero, and both complements equal to one.

The default run tests \eqref{eq:bordercofactor} on duplicate-row data, including a column permutation and an iterated extension. These tests check the general cofactor identity; the test data do not satisfy
the requirements for admissible unit-complement borders.

In particular, the program does not construct an admissible first row in order ten or an admissible order-eleven extension. The existence of these objects follows from the analytic and geometric arguments in the our main text.


\end{document}